\documentclass[reqno, 12pt]{amsart}

\numberwithin{equation}{section}

\usepackage{amsfonts,amssymb,amsmath,amsthm}
\usepackage{enumerate}
\usepackage{bbm}
\usepackage{times}
\usepackage{amsfonts}
\usepackage{amssymb}
\usepackage{bbm}
\usepackage{times}
\usepackage{amsfonts}
\usepackage{amssymb}
\usepackage{bbm}
\usepackage{times}

\usepackage{enumerate}
\usepackage{amsmath}
\usepackage{amsthm}
\usepackage{color}

\usepackage [latin2]{inputenc}
\bibliographystyle{unsrt}

\newtheorem{thm}{Theorem}[section]
\newtheorem{lem}[thm]{Lemma}

\newtheorem{prop}[thm]{Proposition}

\newcounter{other}            
\newtheorem{otherth}[other]{Theorem}              

\newcommand{\aut}{{\rm Aut}(\D)}
\def\B{\mathcal{B}}
\def\D{\mathbb{D}}
\def\C{\mathbb{C}}

\newcommand{\cn}{\C^n}

\def\ind{\int_\D}

\def\Q{\mathcal Q}

\def\ind{\int_\D}

\newcommand{\mo}{M\"obius\ }

\begin{document}

\title[Duality for $\alpha$-M\"obius invariant Besov  spaces]
{Duality for $\alpha$-M\"obius invariant Besov  spaces}

\author{Guanlong Bao}
\address{Department of Mathematics\\
         Shantou University\\
         Shantou 515063, Guangdong, China}
\email{glbao@stu.edu.cn}

\author{Zengjian Lou}
\address{Department of Mathematics\\
         Shantou University\\
         Shantou 515063, Guangdong, China}
\email{zjlou@stu.edu.cn}

\author{Xiaojing Zhou}
\address{Department of Mathematics\\
         Shantou University\\
         Shantou 515063, Guangdong, China}
\email{18xjzhou@stu.edu.cn}

\thanks{G. Bao   was supported  by National Natural Science Foundation  of China (No. 12271328) and Guangdong Basic and Applied Basic Research Foundation (No. 2022A1515012117). Z. Lou and X. Zhou was supported  by National Natural Science Foundation  of China (No. 12071272) }

\subjclass [2010]{30H25,  30H30, 46E15}

\keywords{M\"obius group; M\"obius invariant spaces;  Besov spaces; Bloch type space. }

\begin{abstract}
For $1\leq p\leq \infty$ and $\alpha>0$,  Besov spaces  $B^p_\alpha$ play a key role in the theory of   $\alpha$-M\"obius invariant function spaces. In some sense,  $B^1_\alpha$ is the minimal $\alpha$-M\"obius invariant function space, $B^2_\alpha$ is the unique $\alpha$-M\"obius invariant Hilbert space, and $B^\infty_\alpha$ is the maximal $\alpha$-M\"obius invariant function space.  In this paper, under the  $\alpha$-M\"obius invariant pairing and by the space $B^\infty_\alpha$,
we identify the  predual and dual spaces of $B^1_\alpha$. In particular, the corresponding identifications are isometric isomorphisms. The duality theorem via the  $\alpha$-M\"obius invariant pairing for $B^p_\alpha$ with $p>1$ is also given.

\end{abstract}

\maketitle

\section{Introduction}

 Let $\D$ be the open unit disk in the complex plane. Denote by
$\aut$ the M\"obius  group of  one-to-one analytic  functions that maps $\D$ onto itself.  For $a\in \D$, let
$$
\sigma_a(z)=\frac{a-z}{1-\overline{a}z}, \qquad z\in \D,
$$
be   a M\"obius map  of $\D$ interchanging the points $0$ and $a$.  It is well known  that
$$
\text{Aut}(\D)=\{e^{i\theta}\sigma_a:\  \ a\in \D \ \text{and} \ \theta \ \ \text{is a real number}\}.
$$
A space  $X$ contained in  $H(\D)$,  the set  of functions analytic in $\D$,  is said to be  M\"obius invariant if it is equipped with a semi-norm $\rho$ such that   $f\circ\varphi\in X$ and
$\rho(f\circ\varphi)\lesssim\rho(f)$ for all $f\in X$ and all $\varphi\in\aut$.
The study of the theory of M\"obius invariant function spaces is a  classical topic in complex analysis (cf. \cite{AF, AFP, AXZ,  RT, WZ,  X1, X2}). Roughly speaking,  it is known from \cite{AF, AFP, RT} that  the
Bloch space $\B$, the Dirichlet space $\mathcal D$, and the Besov space $B^1$ give us  the  maximal  M\"obius invariant function space, the unique \mo invariant Hilbert space, and the minimal M\"obius invariant function space, respectively.

For $\alpha>0$,  let $\B_\alpha$ be the Bloch type space consisting of those functions $f\in H(\D)$ satisfying
$$
|||f|||_{\B_\alpha}=\sup_{z\in \D}(1-|z|^2)^\alpha |f'(z)|<\infty.
$$
$|||\cdot|||_{\B_\alpha}$ is a semi-norm on $\B_\alpha$.  Denote by $\B_{\alpha, 0}$  the closure of polynomials in $\B_\alpha$.
If $\alpha=1$, then $\B_\alpha$ is the Bloch space $\B$. Because of the maximal property of $\B$ among  M\"obius invariant function spaces,  K. Zhu \cite{Zhu1} posed  the question of whether $\B_\alpha$ is maximal among some family of analytic function spaces.  To answer K. Zhu's question and understand the theory of a general family of analytic function spaces $F(p, q, s)$,  in 1996 R. Zhao  \cite{Zhao} introduced a notion of weighted composition  of $f$ in $H(\D)$ with $\varphi$ in $\aut$. For $\alpha>0$, $f\in H(\D)$, and $\varphi\in\aut$, let
$$
f\circ_\alpha\varphi(z)=\int_0^zf'(\varphi(w))(\varphi'(w))^\alpha\,dw+f(\varphi(0))(\varphi'(0))^{\alpha-1}
$$
for $z\in\D$. Clearly, if $\alpha=1$, then  $f\circ_\alpha\varphi=f\circ\varphi$, the usual composition of $f$ and $\varphi$. Suppose $X$ is a subspace of  $H(\D)$ equipped with a semi-norm $\rho$.  Following Section 4 in \cite{Zhao}, we say that  $X$ is
$\alpha$-\mo invariant if  $f\circ_\alpha \varphi\in X$ and
$\rho(f\circ_\alpha\varphi)\le C\rho(f)$ for all $f\in X$ and all $\varphi\in\aut$, where $C$ is a positive constant independent of $f$ and $\varphi$. In fact,
if   the condition above holds for  a semi-norm $\rho$, then there is  another  equivalent semi-norm $\rho'$ on $X$
satisfying that  $\rho'(f\circ_\alpha\varphi)=\rho'(f)$ for all $f\in X$ and $\varphi\in\aut$. Also, when $\alpha=1$, an $\alpha$-\mo invariant function space is a \mo invariant function space. Indeed, among $\alpha$-\mo invariant function spaces,  R. Zhao  \cite{Zhao} proved that   $\B_\alpha$  is maximal in some sense. In \cite[p. 54]{Zhao}, R. Zhao posed the following two questions:  which space is the minimal $\alpha$-\mo invariant function space? Is there one $\alpha$-\mo invariant Hilbert space only? The two questions are answered in \cite{BZ} recently. We refer to \cite{AM, KU1, KU2} for some recent  study  related to M\"obius invariant function spaces.

 For $\beta >-1$, we write $dA_\beta (z)=(\beta +1)(1-|z|^2)^\beta  dA(z)$, where $dA(z)=1/\pi dxdy$, $z=x+iy$,  is  the normalized Lebesgue measure on $\D$.
Suppose  $1\leq p < \infty$, $\alpha>0$, and $n$ is a positive integer with $p(\alpha-1+n)-1>0$. Recall that the Besov space  $B^p_\alpha$ is the set of those functions $f$ in $H(\D)$ satisfying that the function
$f^{(n)}(z)(1-|z|^2)^{\alpha-1+n}$ is in $L^p(\D, d\lambda)$, where $d\lambda(z)=(1-|z|^2)^{-2}dA(z)$ is the \mo invariant  Lebesgue measure on $\D$. Also, for $\alpha>0$ and a positive integer $n$, $B^\infty_\alpha$ consists of functions $f$ in $H(\D)$ such that $f^{(n)}(z)(1-|z|^2)^{\alpha-1+n}$ is bounded on $\D$. It is known from \cite{ZZ} that $B^p_\alpha$ is independent of the choice of the integer $n$. Clearly, $B^\infty_\alpha$ is the Bloch type space $\B_\alpha$.  By \cite{Zhao},  $|||f\circ_\alpha \varphi|||_{\B_\alpha}=|||f|||_{\B_\alpha}$ for every $f\in \B_\alpha$ and  $\varphi \in \aut$.  From Theorem 6.2 in \cite{BZ}, for any $p\geq 1$ and $\alpha>0$, the space $B^p_\alpha$ is also  $\alpha$-\mo invariant.

For  $\alpha>0$,  denote by  $M_\alpha$ the space of those functions $f\in H(\D)$ that can be
represented as
\begin{equation}\label{eq4}
f(z)=c_0+\sum_{n=1}^\infty c_n\int_0^z\varphi'_n(w)^\alpha\,dw,\qquad z\in\D,
\end{equation}
where $c_0\in\C$, $\{c_n\}\in\ell^1$, and every  $\varphi_n\in\aut$.  A semi-norm on $M_\alpha$ is given by
$$|||f|||_{M_\alpha}=\inf\left\{\sum_{n=1}^\infty |c_n|: \ \ (\ref{eq4})\  \text{holds}\right\}.$$
It is known from \cite{BZ} that $|||f\circ_\alpha \varphi|||_{M_\alpha}=|||f|||_{M_\alpha}$ for every $f\in M_\alpha$ and  $\varphi \in \aut$, and $M_\alpha$ is minimal among all non-trivial  $\alpha$-\mo invariant function spaces.
From Theorem 3.2 in  \cite{BZ},
\begin{equation}\label{Ma equa}
|||f|||_{M_\alpha}\thickapprox |f'(0)|+ \ind|f''(z)|(1-|z|^2)^{\alpha-1}\,dA(z).
\end{equation}
for all $f\in M_\alpha$. A norm of $f$ in $M_\alpha$ is $\|f\|_{M_\alpha}=|f(0)|+|||f|||_{M_\alpha}$. Because of  (\ref{Ma equa}), $M_\alpha$ is the Besov space $B^1_\alpha$.

Given  $\alpha>0$ and a function $f(z)=\sum_{n=0}^\infty a_n z^n$  analytic in $\D$,  we say that $f\in H_\alpha$ if
$$
|||f|||_{H_\alpha}= \sum_{n=1}^\infty\frac{n!\,\Gamma(2\alpha)}{\Gamma(n+2\alpha-1)} n |a_n|^2<\infty.
$$
From \cite{BZ},  $|||f\circ_\alpha \varphi|||_{H_\alpha}=|||f|||_{H_\alpha}$ for each  $f\in H_\alpha$ and  $\varphi \in \aut$, and $H_\alpha$ is the unique
non-trivial $\alpha$-\mo invariant Hilbert space. It is also known from \cite{BZ} that  $H_\alpha$ is the Besov space $B^2_\alpha$.

Fix $\alpha>0$,  related to the space $H_\alpha$, a paring on $\D$ is given by
\begin{equation}\label{paring}
\langle f,g\rangle_\alpha= \lim_{r\rightarrow 1^-}\sum_{n=1}^\infty\frac{n!\,\Gamma(2\alpha)}{\Gamma(n+2\alpha-1)}
\,na_n\overline b_n r^{2n},
\end{equation}
if the limit exists, where $f(z)=\sum_{n=0}^\infty a_nz^n$ and $g(z)=\sum_{n=0}^\infty b_nz^n$ are functions analytic in $\D$.  In fact, it is known from \cite{BZ} that  $$
\langle f\circ_\alpha\varphi,g\circ_\alpha\varphi\rangle_\alpha=\langle f,g\rangle_\alpha
$$
for all $\varphi\in\aut$. Thus we say that $\langle \cdot, \cdot\rangle_\alpha$ is an $\alpha$-M\"obius invariant pairing.

In this paper, under the $\alpha$-M\"obius invariant pairing, we give  the dual relation between the minimal and the maximal $\alpha$-M\"obius invariant function spaces.
The corresponding identifications are isometric isomorphisms. In particular, we prove that  there exists $f(z)=\sum_{n=0}^\infty a_n z^n$ in $B_{\alpha, 0}$ and $g(z)=\sum_{n=0}^\infty b_n z^n$ in $M_{\alpha}$ such that
$$
\sum_{n=1}^\infty\frac{n!\,\Gamma(2\alpha)}{\Gamma(n+2\alpha-1)}
\,n \overline a_n b_n
$$
is divergent, which means that the definition of  $\langle \cdot, \cdot\rangle_\alpha$ is reasonable via a limit.  We also investigate  the duality theorem for $B^p_\alpha$ when  $p>1$.

Throughout  this paper,   we write $a \lesssim b$ if   there exists a positive constant $C$ such that $a \leq Cb$. If  $a\lesssim b\lesssim a$, then we write  $a \thickapprox b$.

\section{$\alpha$-M\"obius invariant pairing and a general  duality result}

In this section, we give  some equalities of the $\alpha$-M\"obius invariant pairing and  a general  duality theorem for $\alpha$-M\"obius invariant function spaces. These results will be useful in  next sections.

\begin{lem} \label{a mo pair}
Suppose both $f(z)=\sum_{n=0}^\infty a_nz^n$ and $g(z)=\sum_{n=0}^\infty b_nz^n$ belong to $H(\D)$, and the limit $\lim_{r\rightarrow 1^-}\sum_{n=1}^\infty\frac{n!\,\Gamma(2\alpha)}{\Gamma(n+2\alpha-1)}\,na_n\overline b_n r^{2n}$ exists.  Then the following statements hold.
\begin{itemize}
  \item [(a)] For  $\alpha>1/2$,
  \begin{equation}\label{paring integral}
\langle f,g\rangle_\alpha= (2\alpha-1) \lim_{r\rightarrow 1^-}\ind(1-|z|^2)^\alpha f'(rz)\overline{(1-|z|^2)^\alpha g'(rz)}\,d\lambda(z).
\end{equation}
\item [(b)] For  $\alpha>0$,
\begin{eqnarray}
\nonumber \langle f,g\rangle_\alpha&= &\lim_{r\rightarrow 1^-}\Big[\ind f'(rz)\overline{g'(rz)}dA_{2\alpha-1}(z)\\
&~&+\frac{\Gamma(2\alpha)}{\Gamma(2\alpha+2)} r^2\ind f''(rz)\overline{g''(rz)}dA_{2\alpha}(z)\Big].  \label{another integral paring1}
\end{eqnarray}
\end{itemize}
\end{lem}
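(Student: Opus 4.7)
The plan is to expand the integrals on the right-hand sides of (\ref{paring integral}) and (\ref{another integral paring1}) as power series in the Taylor coefficients of $f$ and $g$, and match them directly against the series defining $\langle f,g\rangle_\alpha$.

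First I would note that for each fixed $r\in(0,1)$, the functions $f'(rz)$, $g'(rz)$, $f''(rz)$, $g''(rz)$ are analytic on a neighborhood of $\overline{\D}$, so their Taylor series converge uniformly on $\overline{\D}$, justifying termwise integration. The key orthogonality identity I would use is
\begin{equation*}
\ind z^{k-1}\overline{z}^{\,\ell-1}(1-|z|^2)^s\,dA(z)=\delta_{k\ell}\,\frac{\Gamma(k)\,\Gamma(s+1)}{\Gamma(k+s+1)},\qquad s>-1,
\end{equation*}
which is immediate from polar coordinates and the Beta integral.

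For part (a), with $s=2\alpha-2$ (requiring $\alpha>1/2$), the expansion $f'(rz)\overline{g'(rz)}=\sum_{n,m\ge 1}nm\,a_n\overline{b}_m r^{n+m-2}z^{n-1}\overline{z}^{\,m-1}$ collapses under the integral to
\begin{equation*}
(2\alpha-1)\ind(1-|z|^2)^{2\alpha}f'(rz)\overline{g'(rz)}\,d\lambda(z)=\sum_{n=1}^\infty\frac{n\cdot n!\,(2\alpha-1)\Gamma(2\alpha-1)}{\Gamma(n+2\alpha-1)}\,a_n\overline{b}_n\,r^{2n-2}.
\end{equation*}
Since $(2\alpha-1)\Gamma(2\alpha-1)=\Gamma(2\alpha)$, and since the assumed existence of $\lim_{r\to 1^-}\sum_{n=1}^\infty\frac{n!\,\Gamma(2\alpha)}{\Gamma(n+2\alpha-1)}na_n\overline{b}_n r^{2n}$ together with $r^{2n-2}=r^{-2}\cdot r^{2n}$ forces the same limit with exponent $2n-2$, (\ref{paring integral}) follows.

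For part (b), I would run the analogous computation for each of the two terms. Applying the orthogonality identity with $s=2\alpha-1>-1$ (here only $\alpha>0$ is needed) gives
\begin{equation*}
\ind f'(rz)\overline{g'(rz)}\,dA_{2\alpha-1}(z)=\sum_{n=1}^\infty\frac{2\alpha\cdot n\cdot n!\,\Gamma(2\alpha)}{\Gamma(n+2\alpha)}\,a_n\overline{b}_n\,r^{2n-2},
\end{equation*}
and with $s=2\alpha$ together with $(n-1)^2\Gamma(n-1)=(n-1)\cdot n!/n$ I obtain
\begin{equation*}
\frac{\Gamma(2\alpha)}{\Gamma(2\alpha+2)}\,r^2\ind f''(rz)\overline{g''(rz)}\,dA_{2\alpha}(z)=\sum_{n=1}^\infty\frac{n(n-1)\cdot n!\,\Gamma(2\alpha)}{\Gamma(n+2\alpha)}\,a_n\overline{b}_n\,r^{2n-2}.
\end{equation*}
Adding the two series and using the algebraic identity $2\alpha\cdot n+n(n-1)=n(n+2\alpha-1)$ together with $(n+2\alpha-1)/\Gamma(n+2\alpha)=1/\Gamma(n+2\alpha-1)$ produces exactly the series defining $\langle f,g\rangle_\alpha$; passing to the limit as $r\to 1^-$ (again using $r^{2n-2}\leftrightarrow r^{2n}$) completes part (b).

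The computation is essentially routine; the main point requiring care is the Gamma-function bookkeeping, and the pleasant observation is that in part (b) the two integrals combine so that the $\Gamma(n+2\alpha)$ in the denominator is shifted down to $\Gamma(n+2\alpha-1)$. This shift is precisely what allows (\ref{another integral paring1}) to cover the full range $\alpha>0$, whereas (\ref{paring integral}) is restricted to $\alpha>1/2$ by the integrability of the weight $(1-|z|^2)^{2\alpha-2}$.
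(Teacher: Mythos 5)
Your proposal is correct, and the computations check out: the orthogonality/Beta identity, the Gamma-function bookkeeping (in particular $n^2\Gamma(n)=n\cdot n!$, $n^2(n-1)^2\Gamma(n-1)=n(n-1)\,n!$, and $2\alpha n+n(n-1)=n(n+2\alpha-1)$), and the passage from the exponent $2n-2$ to $2n$ via the factor $r^{-2}$ are all handled properly, with the termwise integration justified by the analyticity of $f'(rz)$, $g'(rz)$ on a neighborhood of $\overline{\D}$ and the integrability of the weight ($\alpha>1/2$ for (a), $\alpha>0$ for (b)). The difference from the paper is one of packaging rather than of substance: the paper's proof is a one-line reduction, applying Lemmas 4.2 and 4.3 of \cite{BZ} to the dilations $f_r$, $g_r$ and letting $r\to 1^-$, so the Taylor-coefficient/orthogonality computation you carry out is precisely what those cited lemmas encapsulate. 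What your route buys is a self-contained argument that makes transparent why (b) holds for all $\alpha>0$ while (a) needs $\alpha>1/2$ (integrability of $(1-|z|^2)^{2\alpha-2}$ versus the shift from $\Gamma(n+2\alpha)$ down to $\Gamma(n+2\alpha-1)$ after adding the two terms); what the paper's route buys is brevity and consistency with the framework of \cite{BZ}, whose exact identities for fixed $r$ immediately yield the stated limit formulas.
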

\begin{proof}
For $r\in (0, 1)$,  replacing $f$ and $g$  in \cite[Lemma 4.2 and Lemma 4.3]{BZ} by $f_r$ and $g_r$ respectively, we get the desired results.  Here $f_r(z)=f(rz)$ and $g_r(z)=g(rz)$.
\end{proof}

Next, by the  $\alpha$-M\"obius invariant pairing, we give a description of the dual space of an  $\alpha$-\mo invariant function space.

\begin{thm}\label{dualgeneral}
Suppose  $\alpha>0$ and $X$ is a function space equipped with  an $\alpha$-\mo invariant semi-norm  $|||\cdot|||_{X}$. Assume that polynomials are dense in $X$.
Then the dual space $(X/\C)^*$ of $X/\C$ can be identified with the space  of  functions
$f\in H(\D)$ satisfying
$$
|||f|||_{(X/\C)^*}=\sup\{|\langle g,f\rangle_\alpha|: g\in X, |||g|||_{X} \leq 1\}<\infty.
$$
 Moreover, $(X/\C)^*$ is an $\alpha$-\mo invariant function space.
\end{thm}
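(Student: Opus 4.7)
The plan is to construct an explicit isometric correspondence between $(X/\C)^*$ and the space of $f\in H(\D)$ with $|||f|||_{(X/\C)^*}<\infty$. For the \emph{forward map}, assume $|||f|||_{(X/\C)^*}<\infty$. For any polynomial $g$ the series defining $\langle g,f\rangle_\alpha$ is finite, so the pairing is automatically defined and depends only on the Taylor coefficients of $g$ with $n\ge 1$; in particular it annihilates constants. The hypothesis makes the assignment $g\mapsto\langle g,f\rangle_\alpha$ a bounded linear functional on the polynomials equipped with $|||\cdot|||_X$, and density of polynomials in $X$ extends it uniquely to a functional $L_f\in (X/\C)^*$ with $\|L_f\|\le |||f|||_{(X/\C)^*}$.

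For the \emph{backward map}, given $L\in (X/\C)^*$, test $L$ against monomials and set
$$
b_n=\overline{\dfrac{\Gamma(n+2\alpha-1)}{n\cdot n!\,\Gamma(2\alpha)}\,L(z^n)},\qquad n\ge 1,
$$
together with $f_L(z)=\sum_{n\ge 1}b_nz^n$. The delicate point, and the main obstacle in the argument, is proving $f_L\in H(\D)$: I would invoke the minimality of $M_\alpha$ among non-trivial $\alpha$-M\"obius invariant function spaces from \cite{BZ}, which yields a continuous embedding $M_\alpha\hookrightarrow X$, whence $|||z^n|||_X\lesssim |||z^n|||_{M_\alpha}$. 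A direct computation from (\ref{Ma equa}) gives $|||z^n|||_{M_\alpha}\thickapprox n(n-1)\,\Gamma(n/2)\Gamma(\alpha)/\Gamma(n/2+\alpha)$, which grows only polynomially in $n$, and the polynomial growth of $\Gamma(n+2\alpha-1)/n!$ then forces $\limsup|b_n|^{1/n}\le 1$. By construction $L_{f_L}$ and $L$ agree on every monomial, so by density they coincide on all of $X/\C$; running the monomial computation backward recovers the Taylor coefficients of $f$ from $L_f$, yielding $f_{L_f}=f \bmod \C$. Because $L_f$ is determined by its values on polynomials, the supremum in $|||f|||_{(X/\C)^*}$ may be taken over polynomials alone, so $|||f|||_{(X/\C)^*}=\|L_f\|$ and the correspondence is an isometric isomorphism.

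Finally, the $\alpha$-M\"obius invariance of $|||\cdot|||_{(X/\C)^*}$ is a quick consequence of the invariance of the pairing itself. Using $\langle h\circ_\alpha\varphi,k\circ_\alpha\varphi\rangle_\alpha=\langle h,k\rangle_\alpha$ together with the identity $(f\circ_\alpha\varphi)\circ_\alpha\varphi^{-1}=f$ modulo a constant (which the pairing kills) gives $\langle g,f\circ_\alpha\varphi\rangle_\alpha=\langle g\circ_\alpha\varphi^{-1},f\rangle_\alpha$, and the $\alpha$-M\"obius invariance of $|||\cdot|||_X$ via $|||g\circ_\alpha\varphi^{-1}|||_X\lesssim|||g|||_X$ then yields $|||f\circ_\alpha\varphi|||_{(X/\C)^*}\lesssim|||f|||_{(X/\C)^*}$, establishing the last assertion.
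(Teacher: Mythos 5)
Your proposal is correct and follows essentially the same route as the paper: the backward map via the coefficients $\overline{F(z^n)}\,\Gamma(n+2\alpha-1)/(n!\,\Gamma(2\alpha)\,n)$, the growth bound $|||z^n|||_X\lesssim|||z^n|||_{M_\alpha}\approx n^{2-\alpha}$ from the minimality of $M_\alpha$ (your Beta-function expression is the same quantity), density of polynomials to identify $F$ with $\langle\cdot,f\rangle_\alpha$, and invariance of the pairing plus the semi-norm for the last assertion. The only cosmetic difference is that you spell out the isometry and the invariance computation in slightly more detail than the paper does.
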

\begin{proof}
Let  $f\in H(\D)$ with $|||f|||_{(X/\C)^*}<\infty$. Clearly, $\langle \cdot,f\rangle_\alpha$ gives a bounded linear functional on $X/\C$.

Now let $F$ be  a bounded linear functional on $X$ with $F(1)=0$.
Write
\begin{equation}\label{an F}
a_n=\overline{F(z^n)} \frac{\Gamma(n+2\alpha-1)}{n! \,\Gamma(2\alpha) n}, \ \ n=1, 2, 3, \cdots ,
\end{equation}
 and set $f(z)=\sum_{n=1}^{+\infty} a_n z^n$ for $z\in \D$.  Then
$$
|a_n|\leq \frac{\Gamma(n+2\alpha-1)}{n! \,\Gamma(2\alpha) n} \|z^n\|_{X}\|F\|,
$$
where $\|\cdot\|_X$ is a  norm of $X$, and $\|F\|$ is the norm of the bounded linear functional  $F$.
Note that $M_\alpha$ is the minimal non-trivial $\alpha$-\mo invariant function space. By Theorem 3.3 in \cite{BZ},
$$
\|z^n\|_{X}\lesssim \|z^n\|_{M_\alpha}
$$
for all positive integers $n$. Clearly, $\|z\|_{M_\alpha}$ is a positive constant.
For $n\geq 2$, a direct computation yields
$$
\|z^n\|_{M_\alpha}\approx n(n-1)\ind|z|^{n-2}(1-|z|^2)^{\alpha-1}\,dA(z)\approx n^{2-\alpha},
$$
where the comparable positive constants depend only on $\alpha$. Consequently,
 $$
 |a_n|\lesssim \frac{\Gamma(n+2\alpha-1)}{n! \,\Gamma(2\alpha) n}n^{2-\alpha} \|F\|
 $$
 for all positive integers $n$. Note that the function  $\sum_{n=1}^{+\infty} \frac{\Gamma(n+2\alpha-1)}{n! \,\Gamma(2\alpha) n}n^{2-\alpha} z^n$ belongs to $H(\D)$. Thus $f\in H(\D)$.
For any polynomial $g(z)=\sum_{n=0}^N b_n z^n$, where $N$ is a positive integer, it follows from (\ref{an F}) that
\begin{eqnarray*}
F(g)&=& b_0F(1)+\sum_{n=1}^N b_n F(z^n)\\
&=& \sum_{n=1}^N b_n \overline{a_n} \frac{n! \,\Gamma(2\alpha) n}{\Gamma(n+2\alpha-1)}\\
&=& \langle g, f\rangle_\alpha.
\end{eqnarray*}
Since  polynomials are dense in $X$, we see that $F(h)= \langle h,f\rangle_\alpha$ for all $h\in X$. Thus $(X/\C)^*$ can be identified with the space of functions $f\in H(\D)$ with $|||f|||_{(X/\C)^*}<\infty$.

The $\alpha$-M\"obius invariance of $\langle \cdot,\cdot\rangle_\alpha$ and $|||\cdot|||_{X}$ yields that $|||\cdot|||_{(X/\C)^*}$ is an $\alpha$-M\"obius invariant semi-norm of $(X/\C)^*$. The proof is complete.
\end{proof}

 Theorem \ref{dualgeneral}   generalizes  the classical duality result in \cite{AFP}   from    \mo invariant function spaces to $\alpha$-\mo invariant function spaces for each   $\alpha>0$.

\section{Duality theorems associated with  the minimal and maximal $\alpha$-M\"obius invariant function spaces }

In this section, for  every $\alpha>0$,  we show that the dual  space of $M_\alpha/\C$ can be identified with $\B_\alpha/\C$,  and the predual space  of $M_\alpha/\C$ can be  identified with $\B_{\alpha, 0}/\C$.  The corresponding identifications  are isometric isomorphisms. For the clear  presentation, we will state these  results by the notion of  bounded linear functional.

We first consider the existence of the limit in (\ref{paring}) as follows.
\begin{lem}\label{31}
Suppose $\alpha>0$.
For   $f(z)=\sum_{n=0}^\infty a_nz^n$ in $\B_\alpha$  and  $g(z)=\sum_{n=0}^\infty b_nz^n$ in   $M_\alpha$,    let
$$
h(r)=\sum_{n=1}^\infty \frac{n!\,\Gamma(2\alpha)}{\Gamma(n+2\alpha-1)}
\,n a_n \overline{b_n}r^{2n}.
$$
Then the limit $\lim_{r\to 1^-}h(r)$ exists.
\end{lem}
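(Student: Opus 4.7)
The plan is to use Lemma \ref{a mo pair}(b) to rewrite $h(r)$ as an explicit integral over $\D$, show that $g\mapsto h(r)$ defines a family of linear functionals on $M_\alpha$ whose operator norms are bounded uniformly in $r\in(0,1)$, and then combine this uniform bound with the density of polynomials in $M_\alpha=B^1_\alpha$ and the trivial convergence of $h(r)$ when $g$ is a polynomial to run a Cauchy argument.

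First I would set $f_r(z)=f(rz)$ and $g_r(z)=g(rz)$. Because both are analytic on a disk strictly larger than $\overline{\D}$, the Taylor series defining $\langle\cdot,\cdot\rangle_\alpha$ converges absolutely already at $\rho=1$, so $\langle f_r,g_r\rangle_\alpha=h(r)$. Applying Lemma \ref{a mo pair}(b) to the pair $(f_r,g_r)$, for which the outer limit is unnecessary, and using $f_r^{(j)}(z)=r^j f^{(j)}(rz)$ and the analogous identity for $g$, yields the representation
$$
h(r)=r^2\ind f'(rz)\overline{g'(rz)}\,dA_{2\alpha-1}(z)+\frac{\Gamma(2\alpha)}{\Gamma(2\alpha+2)}\,r^4\ind f''(rz)\overline{g''(rz)}\,dA_{2\alpha}(z).
$$

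To bound $h(r)$ uniformly in $r$, I would insert $|f'(w)|\le|||f|||_{\B_\alpha}(1-|w|^2)^{-\alpha}$ and, via Cauchy's estimate, $|f''(w)|\lesssim|||f|||_{\B_\alpha}(1-|w|^2)^{-\alpha-1}$, together with $1-|rz|^2\ge 1-|z|^2$, reducing the task to bounding the two integrals $\int_\D|g^{(j)}(rz)|(1-|z|^2)^{\alpha-1}\,dA(z)$ for $j=1,2$. Since $|g'|$ and $|g''|$ are subharmonic, their circular $L^1$ means on $|z|=t$ are non-decreasing in $t$, so in polar coordinates each such integral is dominated by its value at $r=1$. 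The equivalence (\ref{Ma equa}) and the identity $g'(z)=g'(0)+\int_0^1 zg''(tz)\,dt$ then give $\int_\D |g^{(j)}(z)|(1-|z|^2)^{\alpha-1}\,dA(z)\lesssim\|g\|_{M_\alpha}$ for $j=1,2$, and combining these yields
$$
|h(r)|\le C(\alpha)\,|||f|||_{\B_\alpha}\,\|g\|_{M_\alpha},\qquad r\in(0,1).
$$

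For the conclusion, if $p$ is any polynomial then $h(r)$ with $g$ replaced by $p$ is a polynomial in $r^2$, so it trivially admits a limit as $r\to 1^-$. Given $g\in M_\alpha$ and $\varepsilon>0$, I would pick a polynomial $p$ with $\|g-p\|_{M_\alpha}<\varepsilon$, which is possible because polynomials are dense in $M_\alpha=B^1_\alpha$. The linearity of $g\mapsto h(r)$ combined with the uniform bound of the previous paragraph shows that $|h(r)-h(s)|$ is bounded by the analogous quantity for $p$ plus $2C(\alpha)\,|||f|||_{\B_\alpha}\,\varepsilon$, which is arbitrarily small once $r,s$ are close enough to $1$, so $\{h(r)\}$ is Cauchy and the limit exists. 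I expect the main obstacle to be the uniform bound in the preceding paragraph: the integrands involve derivatives evaluated at the dilated point $rz$, and passing to clean $r=1$ estimates must be done via subharmonicity of moduli of analytic functions together with the identification of $M_\alpha$ with $B^1_\alpha$ through (\ref{Ma equa}).
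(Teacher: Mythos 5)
Your proposal is correct and follows essentially the paper's own route: the integral representation of $h(r)$ obtained from Lemma \ref{a mo pair}(b) applied to the dilates, then the uniform bound $|h(r)|\le C(\alpha)\,|||f|||_{\B_\alpha}\,\|g\|_{M_\alpha}$ via the pointwise Bloch-type estimates for $f'$, $f''$ and the $M_\alpha=B^1_\alpha$ identification, and finally a Cauchy argument as $r\to1^-$. The only (interchangeable) difference is the last step: you approximate $g$ in the $M_\alpha$-norm by polynomials, for which $h(r)$ trivially converges, whereas the paper applies the same uniform bound to $g_{r_1}-g_{r_2}$ and uses that $|||g_{r_1}-g_{r_2}|||_{M_\alpha}\to0$ as $|r_1-r_2|\to0$.
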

\begin{proof}
  For $r\in (1/2, 1)$, it follows from  (\ref{another integral paring1}) that
\begin{align*}
h(r)=\ind f'(rz)\overline{g'(rz)}dA_{2\alpha-1}(z)+\frac{\Gamma(2\alpha)}{\Gamma(2\alpha+2)} r^2\ind f''(rz)\overline{g''(rz)}dA_{2\alpha}(z).
\end{align*}
Combining this with the characterization of weighted Bergman spaces and Bloch type spaces  via higher order derivatives (cf. \cite{Zhu, Zhu1}), we see that
\begin{align}\label{exists lim}
|h(r)|\lesssim &|||f|||_{\B_\alpha}\ind |g'(rz)| dA_{\alpha-1}(z) \nonumber \\
&+\frac{\Gamma(2\alpha)}{\Gamma(2\alpha+2)} |||f|||_{\B_\alpha} \ind |g''(rz)| dA_{\alpha-1}(z) \nonumber \\
\lesssim &|||f|||_{\B_\alpha} \ind |g''(rz)| dA_{\alpha}(z)  +\frac{\Gamma(2\alpha)}{\Gamma(2\alpha+2)} |||f|||_{\B_\alpha} |||g_r|||_{M_\alpha} \nonumber \\
\lesssim &|||f|||_{\B_\alpha} |||g_r|||_{M_\alpha}\leq  C|||f|||_{\B_\alpha} |||g|||_{M_\alpha},
\end{align}
where $C$ is a positive constant depending only on $\alpha$.

Let $r_1$, $r_2$ be any real numbers in  $(1/2, 1)$. Bear in mind (\ref{exists lim}) and
$$
g_{r_1}(z)-g_{r_2}(z)=\sum_{n=0}^\infty b_n (r_1^n-r_2^n)z^n.
$$
We deduce
\begin{align*}
|h(r_1)-h(r_2)|=& \left|\sum_{n=1}^\infty \frac{n!\,\Gamma(2\alpha)}{\Gamma(n+2\alpha-1)}
\,n a_n \overline{b_n}(r_1^{2n}-r_2^{2n})\right| \\
=& \lim_{s \to 1^-} \left|\sum_{n=1}^\infty \frac{n!\,\Gamma(2\alpha)}{\Gamma(n+2\alpha-1)}
\,n a_n \overline{b_n}(r_1^{2n}-r_2^{2n})s^{2n}\right|\\
\leq & C|||f|||_{\B_\alpha} |||g_{r_1}-g_{r_2}|||_{M_\alpha}.
\end{align*}
Note that $|||g_{r_1}-g_{r_2}|||_{M_\alpha}$ tends to $0$ uniformly as $|r_1-r_2|\to 0$. Then $|h(r_1)-h(r_2)| \to 0$  uniformly as $|r_1-r_2|\to 0$. Thus the limit $\lim_{r\to 1^-}h(r)$ exists.
\end{proof}

The following result means that  the dual  space of $M_\alpha/\C$ can be identified with $\B_\alpha/\C$ by isometric isomorphisms.
\begin{thm}\label{1main}
For    $\alpha>0$,   the following statements are true.
\begin{itemize}
  \item [(a)]  For any  $f\in \B_\alpha$, $ \langle \cdot,f\rangle_\alpha$ defines a bounded linear functional on $M_\alpha$.  Moreover,
 \begin{equation}\label{1equial}
  |||f|||_{\B_\alpha}=\sup\{|\langle g,f\rangle_\alpha|: g\in M_\alpha, |||g|||_{M_\alpha} \leq 1\}.
\end{equation}
  \item [(b)] If $F$ is a bounded linear functional on $M_\alpha$ with $F(1)=0$, then there exits a function $f\in \B_\alpha$ such that
  $F(g)= \langle g,f\rangle_\alpha$ for all $g\in M_\alpha$.  Also,
   \begin{equation}\label{bubu1}
  |||f|||_{\B_\alpha}=\sup\{|F(g)|: g\in M_\alpha, |||g|||_{M_\alpha} \leq 1\}.
\end{equation}
   \end{itemize}
\end{thm}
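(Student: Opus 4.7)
The plan is to prove part (a) via a sharp estimate on the atomic building blocks of $M_\alpha$, and then to deduce part (b) from Theorem \ref{dualgeneral} combined with part (a).

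For part (a), the estimate (\ref{exists lim}) inside the proof of Lemma \ref{31} already gives $|\langle g,f\rangle_\alpha|\lesssim |||g|||_{M_\alpha}|||f|||_{\B_\alpha}$, so $\langle\cdot,f\rangle_\alpha$ is a bounded linear functional on $M_\alpha$. To sharpen the constant to exactly $1$, I focus on the generating atoms $\psi_\varphi(z)=\int_0^z\varphi'(w)^\alpha\,dw$ with $\varphi\in\aut$. The key identity I would derive is
\begin{equation*}
\langle \psi_\varphi,f\rangle_\alpha = \overline{(f\circ_\alpha\varphi^{-1})'(0)} = \overline{f'(\varphi^{-1}(0))\,(\varphi^{-1})'(0)^\alpha},
\end{equation*}
obtained by noting that $\psi_\varphi=z\circ_\alpha\varphi$ modulo an additive constant (which the pairing annihilates since the series in (\ref{paring}) starts at $n=1$), applying the $\alpha$-M\"obius invariance with the particular automorphism $\varphi^{-1}\in\aut$ together with the composition rule $(z\circ_\alpha\varphi)\circ_\alpha\varphi^{-1}=z$ modulo a constant, and reading off $\langle z,h\rangle_\alpha=\overline{h'(0)}$ directly from the defining series. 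Since $|(\varphi^{-1})'(0)|=1-|\varphi^{-1}(0)|^2$ for every disk automorphism, the modulus of this pairing equals $|f'(a)|(1-|a|^2)^\alpha$ with $a=\varphi^{-1}(0)$, hence is at most $|||f|||_{\B_\alpha}$.

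To go from atoms to a general $g$, I would fix $\epsilon>0$, select a representation $g=c_0+\sum_n c_n\psi_{\varphi_n}$ with $\sum_n|c_n|\le|||g|||_{M_\alpha}+\epsilon$, truncate to partial sums (which converge to $g$ in $M_\alpha$), and pass to the limit using the continuity of $\langle\cdot,f\rangle_\alpha$ from the preceding step to obtain
\begin{equation*}
|\langle g,f\rangle_\alpha|\le\sum_n|c_n|\,|||f|||_{\B_\alpha}\le(|||g|||_{M_\alpha}+\epsilon)|||f|||_{\B_\alpha}.
\end{equation*}
Letting $\epsilon\to 0$ shows that the supremum in (\ref{1equial}) is at most $|||f|||_{\B_\alpha}$. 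The reverse inequality follows by choosing points $a_k\in\D$ with $(1-|a_k|^2)^\alpha|f'(a_k)|\to|||f|||_{\B_\alpha}$ and testing against $\psi_{\sigma_{a_k}}$, which has $M_\alpha$-semi-norm at most $1$ and (by the atomic identity, using $\sigma_{a_k}^{-1}=\sigma_{a_k}$) pairing with $f$ of modulus exactly $(1-|a_k|^2)^\alpha|f'(a_k)|$.

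For part (b), I would invoke Theorem \ref{dualgeneral} with $X=M_\alpha$, after verifying that polynomials are dense in $M_\alpha$, which follows from the atomic decomposition combined with the standard dilation approximation $g_r\to g$ and Taylor truncation of each atom $\psi_\varphi$ (whose defining function $\varphi'{}^\alpha$ is analytic in a neighborhood of $\overline\D$). The theorem then produces $f\in H(\D)$ such that $F(g)=\langle g,f\rangle_\alpha$ on $M_\alpha$, and by part (a) the associated dual semi-norm $|||f|||_{(M_\alpha/\C)^*}$ equals $|||f|||_{\B_\alpha}$, placing $f$ in $\B_\alpha$ and yielding (\ref{bubu1}). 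The main obstacle is the careful bookkeeping in the atomic identity: tracking the two additive constants that arise from writing $\psi_\varphi$ as $z\circ_\alpha\varphi$ and from $(z\circ_\alpha\varphi)\circ_\alpha\varphi^{-1}=z$ modulo a constant, and confirming they are killed by the pairing. A secondary technical point is the density of polynomials in $M_\alpha$ required to invoke Theorem \ref{dualgeneral}.
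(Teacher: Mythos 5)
Your part (a) has the same overall architecture as the paper's proof: estimate the pairing on the atoms $I_{\alpha,\zeta}=\psi_{\sigma_\zeta}$, sum over an almost-optimal atomic representation of $g$, and obtain the lower bound in (\ref{1equial}) by testing against $I_{\alpha,\zeta}$, whose $M_\alpha$ semi-norm is at most $1$. The genuinely different ingredient is how you derive the atomic identity $\langle \psi_\varphi,f\rangle_\alpha=\overline{(f\circ_\alpha\varphi^{-1})'(0)}$: you use the $\alpha$-\mo invariance of the pairing together with the quasi-group law for $\circ_\alpha$, whereas the paper proves (\ref{claim formula}) analytically, first for $\alpha>\frac12$ from (\ref{paring integral}) and the reproducing formula, and then for all $\alpha>0$ by analytic continuation in the parameter $\alpha$ via (\ref{another integral paring1}). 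Your route is shorter and more conceptual, but two points need attention. First, the invariance $\langle f\circ_\alpha\varphi,g\circ_\alpha\varphi\rangle_\alpha=\langle f,g\rangle_\alpha$ is proved in \cite{BZ} in the Hilbert-space setting $H_\alpha$, and you apply it to a pair $(\psi_\varphi,f)$ with $f\in\B_\alpha$ not necessarily in $H_\alpha$; you should justify this extension (for instance by applying it to the dilations $f_r\in H_\alpha$ and letting $r\to1^-$ with the help of Lemma \ref{31}, or by a change of variables in the integral formulas of Lemma \ref{a mo pair}). Second, for non-integer $\alpha$ the relations $\psi_\varphi=z\circ_\alpha\varphi+\mathrm{const}$ and $(z\circ_\alpha\varphi)\circ_\alpha\varphi^{-1}=z+\mathrm{const}$ hold only up to a unimodular multiplicative constant coming from the branch of $w\mapsto w^\alpha$; this is harmless for (\ref{1equial}) since only moduli are used, but your exact identity should be stated up to such a factor.

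In part (b) there is a genuine (though repairable) gap: after Theorem \ref{dualgeneral} hands you $f\in H(\D)$ with $F(g)=\langle g,f\rangle_\alpha$, you conclude that $f\in\B_\alpha$ ``by part (a)''; but part (a) is proved under the hypothesis $f\in\B_\alpha$, so invoking it here is circular. What is needed, and what the paper supplies, is the evaluation of $F$ on the atoms: expanding $I_{\alpha,\zeta}$ in its Taylor series in $M_\alpha$ and applying $F$ termwise gives $F(I_{\alpha,\zeta})=(|\zeta|^2-1)^\alpha\overline{f'(\zeta)}$, whence $(1-|\zeta|^2)^\alpha|f'(\zeta)|\le\|F\|$ for every $\zeta\in\D$, so $f\in\B_\alpha$, and only then does (\ref{1equial}) yield (\ref{bubu1}). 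Your atomic identity can serve the same purpose, but you must observe that it holds for every $f\in H(\D)$ for which the pairing against $I_{\alpha,\zeta}$ makes sense, not only for $f\in\B_\alpha$; in fact a one-line series computation shows the partial expression equals $(|\zeta|^2-1)^\alpha\,\overline{r^2f'(r^2\zeta)}$, which converges for any analytic $f$. With that observation, together with the density of polynomials in $M_\alpha$ that you correctly flag as the hypothesis of Theorem \ref{dualgeneral}, part (b) closes and coincides in substance with the paper's argument.
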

\begin{proof} (a)\ \  Let   $f\in \B_\alpha$. Because of Lemma \ref{exists lim},  $ \langle g,f\rangle_\alpha$ is well defined for any $g\in M_\alpha$. It is clear that $ \langle \cdot,f\rangle_\alpha$ defines a linear functional on $M_\alpha$.

 For $\alpha>0$  and $\zeta\in \D$,
 we claim   that
\begin{equation}\label{claim formula}
\left\langle I_{\alpha, \zeta}, f \right\rangle_\alpha=(|\zeta|^2-1)^\alpha \overline{f'(\zeta)},
\end{equation}
where
$$
I_{\alpha, \zeta}(z)=: \int_0^z\sigma'_\zeta(w)^\alpha\,dw, \ \ z\in \D.
$$
Consider the case of  $\alpha>\frac{1}{2}$ first. From  (\ref{paring integral}) and the well-known reproducing formula (cf. \cite[Proposition 4.23]{Zhu}), we get that
\begin{eqnarray*}
\left\langle f, I_{\alpha, \zeta} \right\rangle_\alpha&=&  (2\alpha-1)(|\zeta|^2-1)^\alpha \lim_{r\rightarrow 1^-}\ind  \frac{f'(rz)(1-|z|^2)^{2\alpha-2}}{(1-r\zeta \overline{z})^{2\alpha}}dA(z)\\
&=& (|\zeta|^2-1)^\alpha \lim_{r\rightarrow 1^-} f'(r^2\zeta)\\
&=& (|\zeta|^2-1)^\alpha  f'(\zeta),
\end{eqnarray*}
which yields formula  (\ref{claim formula}).  Next we consider the case of  all $\alpha>0$. By (\ref{another integral paring1}),
\begin{align*}
\left\langle f, I_{\alpha, \zeta} \right\rangle_\alpha= &(|\zeta|^2-1)^\alpha  \lim_{r\rightarrow 1^-}\Big[\ind \frac{f'(rz)}{(1-r\zeta \overline{z})^{2\alpha}} dA_{2\alpha-1}(z)\\
&+\frac{\Gamma(2\alpha)}{\Gamma(2\alpha+2)} 2\alpha \zeta r^2\ind \frac{f''(rz)}{(1-r\zeta \overline{z})^{2\alpha+1}}dA_{2\alpha}(z)\Big].
\end{align*}
 Note that $f\in \B_\alpha$ and $\zeta \in \D$.   Lebesgue's  Dominated
Convergence Theorem yields
\begin{eqnarray*}
\left\langle f, I_{\alpha, \zeta} \right\rangle_\alpha
&=& (|\zeta|^2-1)^\alpha \Big[\ind \frac{f'(z)}{(1-\zeta \overline{z})^{2\alpha}} dA_{2\alpha-1}(z)\\
 &~&+\frac{\Gamma(2\alpha)}{\Gamma(2\alpha+2)} 2\alpha \zeta \ind \frac{f''(z)}{(1-\zeta \overline{z})^{2\alpha+1}}dA_{2\alpha}(z) \Big]\\
& =:& G(\alpha).
\end{eqnarray*}
Denote by $\Omega=\{z\in \C: \Re (z) >0\}$ the right-half plane. It is well known  that the Gamma function is analytic on $\Omega$. Hence $G$ extends to an analytic function on $\Omega$. For fixed $\zeta \in \D$, we write   $G_1(\beta)=(|\zeta|^2-1)^\beta  f'(\zeta)$.  Clearly, $G_1$ is also an analytic function on  $\Omega$. We have shown that $G(\beta)=G_1(\beta)$ when $\beta$ is any real number in  $(\frac{1}{2}, +\infty)$. From the uniqueness property of analytic functions, we get $G_1(\beta)=G(\beta)$  for all $\beta \in \Omega$. Hence our claim  holds when $\alpha>0$ and $\zeta \in \D$.

Suppose $g\in M_\alpha$. Then  there exists $c_0\in\C$, $\{c_n\}\in\ell^1$ and $\{\zeta_n\}\subseteq \D$ such that
$$
g(z)=c_0+\sum_{n=1}^\infty c_n I_{\alpha, \zeta_n}(z),\qquad z\in\D.
$$
Consequently,
\begin{eqnarray*}
|\langle g,f\rangle_\alpha|&\leq&\sum_{n=1}^\infty |c_n||\langle f, I_{\alpha, \zeta_n} \rangle_\alpha|\\
&\leq &   \sum_{n=1}^\infty |c_n| \sup_{\zeta \in \D}|\langle f, I_{\alpha, \zeta} \rangle_\alpha|.
\end{eqnarray*}
Taking the  infimum over all representations for $g$, we get
\begin{equation}\label{mid}
|\langle g,f\rangle_\alpha|\leq |||g|||_{M_\alpha} \sup_{\zeta \in \D}|\langle f, I_{\alpha, \zeta} \rangle_\alpha|.
\end{equation}
It follows from  (\ref{claim formula}) and (\ref{mid}) that
\begin{equation}\label{mid1}
|\langle g,f\rangle_\alpha|\leq |||g|||_{M_\alpha} ||||f|||_{\B_\alpha}.
\end{equation}
Thus $ \langle \cdot,f\rangle_\alpha$ defines a bounded linear functional on $M_\alpha$.

Clearly, (\ref{mid1}) implies
$$
\sup\{|\langle h, f\rangle_\alpha|: h\in M_\alpha, |||h|||_{M_\alpha} \leq 1\}\leq  ||||f|||_{\B_\alpha}.
$$
Note that $|||I_{\alpha, \zeta}|||_{M_\alpha}\leq 1$ for any $\zeta\in \D$. Then
$$
 ||||f|||_{\B_\alpha}=\sup_{\zeta \in \D}|\langle f, I_{\alpha, \zeta} \rangle_\alpha|\leq \sup\{|\langle h, f\rangle_\alpha|: h\in M_\alpha, |||h|||_{M_\alpha} \leq 1\}.
$$
Hence
$$
|||f|||_{\B_\alpha}=\sup\{|\langle h, f\rangle_\alpha|: h\in M_\alpha, |||h|||_{M_\alpha} \leq 1\}.
$$

(b)\ \   Suppose $F$ is  a bounded linear functional on $M_\alpha$ with $F(1)=0$.
Set
$$
a_n=\overline{F(z^n)} \frac{\Gamma(n+2\alpha-1)}{n! \,\Gamma(2\alpha) n}, \ \ n=1, 2, 3, \cdots ,
$$
 and let $f(z)=\sum_{n=1}^{+\infty} a_n z^n$ for $z\in \D$.
Note that polynomials are dense in $M_\alpha$. By the proof of Proposition \ref{dualgeneral}, we see that $f\in H(\D)$ and  $F(h)= \langle h,f\rangle_\alpha$ for all $h\in M_\alpha$.

 For  any $\zeta\in \D$ and $z\in \D$,
\begin{eqnarray*}
I_{\alpha, \zeta}(z)&=& (|\zeta|^2-1)^\alpha \int_0^z \frac{1}{(1-\overline{\zeta} w)^{2\alpha}}dw\\
&=& (|\zeta|^2-1)^\alpha \sum_{n=1}^{+\infty} \frac{\Gamma(n+2\alpha-1)}{n! \,\Gamma(2\alpha) } \overline{\zeta}^{n-1}z^n.
\end{eqnarray*}
Consequently,
\begin{eqnarray*}
\nonumber F(I_{\alpha, \zeta}(z))&=&(|\zeta|^2-1)^\alpha \sum_{n=1}^{+\infty} \frac{\Gamma(n+2\alpha-1)}{n! \,\Gamma(2\alpha) } \overline{\zeta}^{n-1} F(z^n)\\
\nonumber  &=& (|\zeta|^2-1)^\alpha \sum_{n=1}^{+\infty}  n \overline{a_n}\overline{\zeta}^{n-1}\\
&=& (|\zeta|^2-1)^\alpha \overline{f'(\zeta)}.
\end{eqnarray*}
Thus,
$$
|||f|||_{\B_\alpha}=\sup_{\zeta\in \D}|F(I_{\alpha, \zeta})|\leq \|F\|\sup_{\zeta\in \D}|||I_{\alpha, \zeta}|||_{M_\alpha}\leq ||F||.
$$
Hence $f\in \B_\alpha$. Since $F(g)= \langle g,f\rangle_\alpha$ for all $g\in M_\alpha$,  it follows from (\ref{1equial}) that  (\ref{bubu1}) also  holds.  We finish the proof.
\end{proof}

The follows theorem gives that the predual space  of $M_\alpha/\C$ can be  identified with $\B_{\alpha, 0}/\C$.  The corresponding identification is also an  isometric isomorphism.

\begin{thm}\label{2main}
For    $\alpha>0$,  the following statements hold.
\begin{itemize}
  \item [(a)] For any  $f\in M_\alpha$, $ \langle \cdot,f\rangle_\alpha$ defines a bounded linear functional on $\B_{\alpha, 0}$. Moreover,
 \begin{equation}\label{2equial}
  |||f|||_{M_\alpha}=\sup\{|\langle h, f\rangle_\alpha|: h\in \B_{\alpha, 0}, |||h|||_{\B_\alpha} \leq 1\}.
\end{equation}
\item [(b)] If $F$ is a bounded linear functional on $\B_{\alpha, 0}$ with $F(1)=0$, then there exits a function $f\in M_\alpha$ such that
  $F(h)= \langle h,f\rangle_\alpha$ for all $h\in \B_{\alpha, 0}$.  Also,
  \begin{equation}\label{bubu2}
  |||f|||_{M_\alpha}=\sup\{|F(h)|: h\in \B_{\alpha, 0}, |||h|||_{\B_\alpha} \leq 1\}.
\end{equation}
\end{itemize}
\end{thm}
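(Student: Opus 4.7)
The strategy is to prove parts (a) and (b) together: part (a) provides the upper bound $\sup_h |\langle h,f\rangle_\alpha|\le |||f|||_{M_\alpha}$ in (\ref{2equial}), while the construction in part (b) produces the representing function $f\in M_\alpha$ and yields the reverse inequality $|||f|||_{M_\alpha}\le \|F\|$; combining the two gives both (\ref{2equial}) and (\ref{bubu2}).

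For part (a), I first observe that the atomic decomposition together with $|||I_{\alpha,\zeta}|||_{\B_\alpha}=1$ gives the continuous inclusion $M_\alpha\subseteq \B_\alpha$ with $|||\cdot|||_{\B_\alpha}\le |||\cdot|||_{M_\alpha}$. Hence Lemma \ref{31}, with the roles of its $f,g$ played by our $h\in \B_{\alpha,0}\subseteq \B_\alpha$ and $f\in M_\alpha$, ensures the existence of $\langle h,f\rangle_\alpha$. Applying inequality (\ref{mid1}) in the proof of Theorem \ref{1main}(a) with its $(g,f)$ replaced by $(f,h)\in M_\alpha\times\B_\alpha$, and using the conjugate symmetry $\langle h,f\rangle_\alpha=\overline{\langle f,h\rangle_\alpha}$, I obtain $|\langle h,f\rangle_\alpha|\le |||f|||_{M_\alpha}\,|||h|||_{\B_\alpha}$, which gives both the boundedness of $\langle\cdot,f\rangle_\alpha$ on $\B_{\alpha,0}$ and the upper bound in (\ref{2equial}).

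For part (b), I first apply Theorem \ref{dualgeneral} with $X=\B_{\alpha,0}$. Polynomials are dense by definition, and $\alpha$-M\"obius invariance is inherited from $\B_\alpha$ since $\circ_\alpha\varphi$ is isometric on $\B_\alpha$ and sends polynomials into $\B_{\alpha,0}$. This produces $f\in H(\D)$ with $F(h)=\langle h,f\rangle_\alpha$ on $\B_{\alpha,0}$. To upgrade $f$ to an element of $M_\alpha$ with $|||f|||_{M_\alpha}\le \|F\|$, I use the isometric embedding $T\colon \B_{\alpha,0}/\C\hookrightarrow C_0(\D)$, $T(h)(z)=(1-|z|^2)^\alpha h'(z)$, whose image lies in $C_0(\D)$ by the little-oh description of $\B_{\alpha,0}$. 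Since $F(1)=0$, Hahn--Banach extends $F$ to a bounded functional on $C_0(\D)$ of equal norm, and Riesz representation produces a finite complex Borel measure $\mu$ on $\D$ with $\|\mu\|=\|F\|$ satisfying
$$
F(h)=\int_\D (1-|z|^2)^\alpha h'(z)\,d\mu(z),\qquad h\in\B_{\alpha,0}.
$$
I then reconstruct $f$ by the continuous atomic formula
$$
f(z)=\eta\int_\D I_{\alpha,\zeta}(z)\,d\overline{\mu}(\zeta),
$$
where $\eta$ is a fixed constant depending only on $\alpha$ that absorbs the branch of $(|\zeta|^2-1)^\alpha$ appearing in (\ref{claim formula}). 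Since $|||I_{\alpha,\zeta}|||_{M_\alpha}\le 1$ for every $\zeta$ and the $M_\alpha$-seminorm is subadditive under integration against $|\mu|$ (a direct consequence of discrete approximation of $\mu$ by atomic measures), we obtain $|||f|||_{M_\alpha}\le \|\mu\|=\|F\|$. The identity $F(h)=\langle h,f\rangle_\alpha$ is verified first on polynomials by Fubini together with (\ref{claim formula}), and then extended by density in $\B_{\alpha,0}$.

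The principal obstacle is this continuous atomic reconstruction: one must verify that the integral defining $f$ is analytic on $\D$ (via a pointwise dominated convergence argument on compact subsets), pin down the correct phase $\eta$ from the branch convention for $(|\zeta|^2-1)^\alpha$, and justify the Fubini interchange required to match $\langle h,f\rangle_\alpha$ against the Riesz representation. Each of these is handled, in the spirit of the derivation of (\ref{claim formula}), via dilation and Lebesgue's dominated convergence. Once part (b) is in place, combining with part (a) yields both (\ref{2equial}) and (\ref{bubu2}) at once.
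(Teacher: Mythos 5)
Your argument is correct in substance but takes a genuinely different route from the paper, most visibly in part (b) and in how the hard inequality of (\ref{2equial}) is obtained. The paper derives everything from Theorem \ref{1main}: a Hahn--Banach corollary together with the isometric identification of $(M_\alpha/\C)^*$ with $\B_\alpha/\C$ gives the identity (\ref{9mid1}), and the dilation trick $\langle h,f_r\rangle_\alpha=\langle h_r,f\rangle_\alpha$ (with $h_r\in\B_{\alpha,0}$) transfers the supremum from $\B_\alpha$ to $\B_{\alpha,0}$ in (a) and yields $|||f_r|||_{M_\alpha}\le\|F\|$ uniformly in $r$ in (b), the representing function being defined only through its Taylor coefficients as in Theorem \ref{dualgeneral}. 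You instead embed $\B_{\alpha,0}/\C$ isometrically into $C_0(\D)$ via $h\mapsto(1-|z|^2)^\alpha h'(z)$, extend $F$ by Hahn--Banach, represent it by a measure $\mu$ with $\|\mu\|=\|F\|$, and rebuild the representing function as a continuous superposition of the atoms $I_{\alpha,\zeta}$; this is more constructive (it produces a continuous atomic decomposition), at the cost of the little-oh description of $\B_{\alpha,0}$, Riesz representation, and a vector-valued integration argument the paper avoids, and it is also different from the weak-$*$ argument of Arazy--Fisher--Peetre recalled in Remark 1. Two points need more care than you give them. First, the crux inequality $|||\eta\int_\D I_{\alpha,\zeta}\,d\overline{\mu}(\zeta)|||_{M_\alpha}\le\|\mu\|$ must be obtained with constant exactly one: applying Fatou to the equivalent integral seminorm (\ref{Ma equa}) only gives the bound up to a multiplicative constant, which would ruin the isometric statements (\ref{2equial}) and (\ref{bubu2}); you should either realize the superposition as a Bochner integral in $M_\alpha$ (using that $\zeta\mapsto I_{\alpha,\zeta}$ is bounded and continuous into $M_\alpha$, and that point evaluations are continuous, so the Bochner and pointwise integrals agree), or show that your Riemann-sum approximants converge to $f$ in the $M_\alpha$ seminorm after splitting off a boundary tail of small total variation, so that the triangle inequality preserves the exact bound $\le\|\mu\|$. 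Second, to get the lower bound in (\ref{2equial}) for an arbitrary $f\in M_\alpha$ you apply (b) to $F=\langle\cdot,f\rangle_\alpha$; you should state explicitly that the function produced in (b) coincides with $f$ modulo constants (test against the monomials $z^n\in\B_{\alpha,0}$), which is what allows the conclusion $|||f|||_{M_\alpha}\le\|F\|$. With these two steps spelled out, your proof is complete.
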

\begin{proof} (a)\ \
   Bear in mind that $\B_{\alpha, 0}$ is a subset of $\B_\alpha$.  For  $f\in M_\alpha$,  from the proof of (a) of Theorem \ref{1main}, one gets  immediately that
$ \langle \cdot,f\rangle_\alpha$ defines a bounded linear functional on $\B_{\alpha, 0}$.  By a well-known corollary of the Hahn-Banach theorem (cf. \cite[Corollary 4.8.6]{Fr}), we know
\begin{align*}
|||f|||_{M_\alpha}&= \sup \Big\{ \frac{|\phi(f)|}{\|\phi\|_{(M_\alpha/\C)^*}}: \\
&  \phi\  \text{is a  bounded linear functional on}\  M_\alpha/\C,\  \phi(1)=0,  \ \text{and} \ \phi\not=0 \Big\}.
\end{align*}
Combining this with Theorem \ref{1main}, we get
\begin{equation}\label{9mid1}
|||f|||_{M_\alpha}=\sup_{h\not =0} \frac{|\langle h,f\rangle_\alpha|}{|||h|||_{\B_\alpha}}
\end{equation}
Thus,
\begin{equation}\label{9mid2}
\sup\{|\langle h, f\rangle_\alpha|: h\in \B_{\alpha, 0}, |||h|||_{\B_\alpha} \leq 1\} \leq |||f|||_{M_\alpha}.
\end{equation}
Note that  $ \langle \cdot,f\rangle_\alpha$ defines a bounded linear functional on $\B_{\alpha, 0}$. Also, given $r\in (0, 1)$, every $h_r$ belongs to $\B_{\alpha, 0}$  if $h\in \B_\alpha$. Consequently, for any $0<r<1$ and  $h\in \B$,
\begin{align*}
|\langle h, f_r\rangle_\alpha|=&|\langle h_r, f\rangle_\alpha|\\
  \leq & |||h_r|||_{\B_\alpha} \sup\{|\langle g, f\rangle_\alpha|: g\in \B_{\alpha, 0}, |||g|||_{\B_\alpha} \leq 1\}\\
  \leq & |||h|||_{\B_\alpha} \sup\{|\langle g, f\rangle_\alpha|: g\in \B_{\alpha, 0}, |||g|||_{\B_\alpha} \leq 1\}.
\end{align*}
 This together with (\ref{9mid1}) yields that
$$
|||f_r|||_{M_\alpha}\leq \sup\{|\langle g, f\rangle_\alpha|: g\in \B_{\alpha, 0}, |||g|||_{\B_\alpha} \leq 1\},
$$
which implies
\begin{equation}\label{9mid3}
|||f|||_{M_\alpha}\leq \sup\{|\langle g, f\rangle_\alpha|: g\in \B_{\alpha, 0}, |||g|||_{\B_\alpha} \leq 1\}.
\end{equation}
By (\ref{9mid2}) and  (\ref{9mid3}), we get (\ref{2equial}).

(b)\ \   Write
$$
b_n=\overline{F(z^n)} \frac{\Gamma(n+2\alpha-1)}{n! \,\Gamma(2\alpha) n}, \ \ n=1, 2, 3, \cdots ,
$$
 and set  $f(z)=\sum_{n=1}^{+\infty} b_n z^n$ for $z\in \D$.  Bear in mind  that polynomials are dense in $\B_{\alpha, 0}$. From  the proof of Proposition \ref{dualgeneral},  $f\in H(\D)$ and  $F(h)= \langle h,f\rangle_\alpha$ for all $h\in \B_{\alpha, 0}$.

For $0<r<1$, it is clear that  $f_r \in M_\alpha$.  For  $g\in \B_{\alpha}$, it is clear that  $g_r\in \B_{\alpha, 0}$. We deduce hat
 \begin{align*}
 |\langle g, f_r\rangle_\alpha|= |\langle g_r, f\rangle_\alpha|=|F(g_r)|\leq \|F\| |||g_r|||_{\B_\alpha}\leq \|F\| |||g|||_{\B_\alpha}
 \end{align*}
 for all $g\in \B_\alpha$. This together with (\ref{9mid1}) gives
 $ |||f_r|||_{M_\alpha}\leq \|F\|$ for all $r\in (0, 1)$.  Thus $f\in M_\alpha$.  It is known from  (\ref{2equial}) that  (\ref{bubu2}) also   holds.  The proof is finished.
 \end{proof}

\vspace{0.1truecm}
\noindent {\bf  Remark 1.}\ \  Theorem  \ref{1main} and Theorem \ref{2main}  generalize the corresponding   results in \cite{AFP} from  \mo invariant function spaces to $\alpha$-\mo invariant function spaces. But our proof of Theorem \ref{2main} is  different from  \cite{AFP}, where the result was obtained by showing  that $M_1$ is weak-$^*$ dense and weak-$^*$ closed in the dual space of  $\B_{1, 0}/\C$ (cf. \cite[p. 124]{AFP}).

\vspace{0.1truecm}
\noindent {\bf  Remark 2.}\ \  K.  Zhu \cite{Zhu1} established the duality results  between the Bergman space $A^1$ and Bloch type spaces $\B_\alpha$ under another  paring, where the related identifications are with equivalent norms and  not isometry.

\vspace{0.1truecm}
\noindent {\bf  Remark 3.}\ \ It is clear that  the function $f$ in  (b) of both Theorem \ref{1main} and Theorem \ref{2main} is unique in the sense of modulo constants.

By the proof of Theorem \ref{1main}, we see that
\begin{eqnarray*}
|\langle f,g\rangle_\alpha|&=& |\lim_{r\rightarrow 1^-}\sum_{n=1}^\infty\frac{n!\,\Gamma(2\alpha)}{\Gamma(n+2\alpha-1)}
\,na_n\overline b_n r^{2n}|\\
&\leq& |||f|||_{\B_\alpha}|||g|||_{M_\alpha}
\end{eqnarray*}
for all $f\in \B_\alpha$ and $g\in M_\alpha$.  It is natural to ask whether the limit in the definition of  $\langle \cdot, \cdot\rangle_\alpha$ can be dropped by taking $r=1$ in the sum.  Using Theorem \ref{1main}, we give a negative  answer to this question  in Theorem \ref{3main} below.  For a positive integer $k$,  throughout this paper, denote by  $S_k$  the operator sending every function in $H(\D)$ to its $k$-th Taylor polynomial.

\begin{thm}\label{3main}
For  $\alpha>0$, there exists $f(z)=\sum_{n=1}^\infty a_n z^n$ in $\B_{\alpha, 0}$ and $g(z)=\sum_{n=1}^\infty b_n z^n$ in $M_{\alpha}$ such that
$$
\sum_{n=1}^\infty\frac{n!\,\Gamma(2\alpha)}{\Gamma(n+2\alpha-1)}
\,n \overline a_n b_n
$$
is divergent.
\end{thm}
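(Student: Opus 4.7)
The plan is a proof by contradiction using two applications of the Banach--Steinhaus theorem, followed by a construction showing that the Taylor partial sum operators $S_k$ are not uniformly bounded on $M_\alpha$.

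Suppose, for a contradiction, that for every $f(z)=\sum a_n z^n\in\B_{\alpha,0}$ and every $g(z)=\sum b_n z^n\in M_\alpha$ the series $\sum_{n=1}^\infty\frac{n!\,\Gamma(2\alpha)}{\Gamma(n+2\alpha-1)}\,n\,\overline{a_n}\,b_n$ converges. Fix $g\in M_\alpha$. The $k$-th partial sum can be rewritten as $\Lambda_k^g(f):=\overline{\langle f,S_k g\rangle_\alpha}$, where the polynomial $S_k g$ lies in $M_\alpha$. By Theorem \ref{2main}(a), $\Lambda_k^g$ is a bounded linear functional on $\B_{\alpha,0}$ whose norm equals $|||S_k g|||_{M_\alpha}$. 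The hypothesis yields pointwise convergence of $\Lambda_k^g(f)$ as $k\to\infty$ for every $f\in\B_{\alpha,0}$, so the uniform boundedness principle gives $\sup_k |||S_k g|||_{M_\alpha}<\infty$ for each $g\in M_\alpha$. Applying Banach--Steinhaus a second time to the bounded (in fact finite-rank) operators $S_k:M_\alpha\to M_\alpha$, we deduce $\sup_k\|S_k\|_{M_\alpha\to M_\alpha}=:C<\infty$. Since polynomials are dense in $M_\alpha$ and $S_k$ is the identity on polynomials of degree at most $k$, this forces $|||g-S_k g|||_{M_\alpha}\to 0$ for every $g\in M_\alpha$, i.e., the monomials $\{z^n\}$ constitute a Schauder basis for $M_\alpha$.

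The contradiction is delivered by producing an explicit $g_0\in M_\alpha$ whose Taylor polynomials do \emph{not} converge to $g_0$ in the $M_\alpha$-norm. A natural candidate is a lacunary series $g_0(z)=\sum_{j\geq 1} d_j z^{2^j}$, with the coefficients $d_j$ chosen using the comparability $|||z^n|||_{M_\alpha}\approx n^{2-\alpha}$ together with the integral characterisation $|||g|||_{M_\alpha}\approx |g'(0)|+\int_{\D}|g''(z)|(1-|z|^2)^{\alpha-1}\,dA(z)$, so that $|||g_0|||_{M_\alpha}<\infty$ while the dyadic tails carry a fixed positive share of the norm; Rudin--Shapiro-type sign patterns may be required to amplify this effect. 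The principal technical obstacle is establishing a sharp two-sided lower bound for $|||\cdot|||_{M_\alpha}$ on these lacunary polynomial blocks. Once such $g_0$ is in hand, running the Banach--Steinhaus argument in reverse on the functionals $\Lambda_k^{g_0}$ (whose norms then blow up) produces $f_0\in\B_{\alpha,0}$ with $\sup_k |\Lambda_k^{g_0}(f_0)|=\infty$, yielding the desired divergent pair.
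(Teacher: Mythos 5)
Your overall skeleton --- argue by contradiction and apply the uniform boundedness principle twice, first to the partial-sum functionals and then to the operators $S_k$ --- is exactly the paper's strategy, but with the roles of the two spaces swapped: the paper fixes $f\in\B_{\alpha,0}$, views $F_k(g)=\langle g,S_kf\rangle_\alpha$ as functionals on $M_\alpha$, uses the isometric duality of Theorem \ref{1main} to get $|||S_kf|||_{\B_\alpha}\le\|F_k\|$, and so reduces the contradiction to the uniform boundedness of $S_k$ on $\B_{\alpha,0}$, which is then refuted by citing the known fact that there are functions in $\B_{\alpha,0}$ whose Taylor polynomials diverge in norm. You instead fix $g\in M_\alpha$, use Theorem \ref{2main}(a) to get $\sup_k|||S_kg|||_{M_\alpha}<\infty$, and reduce to the uniform boundedness of $S_k$ on $M_\alpha$. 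Up to that point your argument is sound (modulo routine seminorm-versus-norm bookkeeping).

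The genuine gap is the final step: you never actually produce a $g_0\in M_\alpha$ whose Taylor polynomials fail to converge in the $M_\alpha$-norm; you only name a ``natural candidate'' and explicitly leave the key lower bound as an unresolved ``technical obstacle.'' Worse, the candidate itself cannot work: for a Hadamard lacunary series $g_0(z)=\sum_j d_jz^{2^j}$ one has $M_1(r,g_0'')\thickapprox\bigl(\sum_j|d_j|^22^{4j}r^{2^{j+1}-4}\bigr)^{1/2}$, so $|||g_0|||_{M_\alpha}\thickapprox\int_0^1 M_1(r,g_0'')(1-r)^{\alpha-1}\,dr$ depends only on the moduli $|d_j|$ (so Rudin--Shapiro-type sign patterns change nothing), and the tails $\sum_{j>J}d_jz^{2^j}$ are again lacunary with norms tending to $0$ by dominated convergence; hence Taylor partial sums of lacunary series in $M_\alpha$ \emph{do} converge in norm. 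The failure of uniform boundedness of $S_k$ on $M_\alpha=B^1_\alpha$ is true, but it reflects the logarithmic growth of the Dirichlet kernel (failure of the Riesz projection on $L^1$) and requires a genuinely different construction or citation. The cleanest repair within your setup is to note that $S_k$ is self-adjoint for $\langle\cdot,\cdot\rangle_\alpha$, so by the isometric dualities of Theorems \ref{1main} and \ref{2main} uniform boundedness of $S_k$ on $M_\alpha$ forces uniform boundedness of $S_k$ on $\B_{\alpha,0}$, which contradicts Zhu's example of a Bloch-type function with norm-divergent Taylor polynomials --- i.e., the same endgame the paper uses.
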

\begin{proof} We follow  the idea in \cite[p. 18]{ACP}.
Suppose the conclusion is not true. For $f(z)=\sum_{n=1}^\infty a_n z^n$ in $\B_{\alpha, 0}$ and every positive integer $k$, define
$$
F_k(g)=\sum_{n=1}^k \frac{n!\,\Gamma(2\alpha)}{\Gamma(n+2\alpha-1)}
\,n \overline a_n b_n,
$$
where   $g(z)=\sum_{n=1}^\infty b_n z^n$ in $M_{\alpha}$. Then every $F_k$  defines a bounded linear functional on $M_\alpha$. Because of our assumption, the uniform boundedness principle
yields that $\{||F_k|| \}$ is a bounded sequence.  Note that
$F_k(g)=\langle g, S_k f\rangle_\alpha$. By Theorem \ref{1main},
$
|||S_k f|||_{\B_\alpha}\leq ||F_k||.
$
Hence $\{|||S_k f|||_{\B_\alpha}\}$ is also a bounded sequence for every $f\in B_{\alpha, 0}$. By the uniform boundedness principle again, we see that
$\{\|S_k \|\}$ is bounded, where $S_k$ is regarded as operator on $\B_{\alpha, 0}$. Thus there exists a positive constant $C$ independing  on $k$ such that
$$
\|S_k f\|_{\B_\alpha}\leq C \| f\|_{\B_\alpha}
$$
for all $f\in B_{\alpha, 0}$. But there exist functions in $\B_{\alpha, 0}$ whose Taylor polynomials divergent in norm (cf. \cite[p. 1159]{Zhu1}). We get a contradiction. This finishes the proof.
\end{proof}

\section{ Duality for $\alpha$-M\"obius invariant $B^p_\alpha$ spaces  with $p>1$ }

For $p>1$ and $\alpha>0$,  this section is devoted to  show that  under the $\alpha$-M\"obius invariant pairing, the dual space of $B^p_\alpha/ \C$ can be identified with $B^q_\alpha/ \C$, where $q$ satisfies $1/p+1/q=1$ and the corresponding  identification  is with equivalent norms.

For the investigation of the Besov space $B^p_\alpha$ with  $p>1$ and $\alpha>0$, the easier case is when $p\alpha>1$. For such case, $B^p_\alpha$ is the space of those functions $f$ in $H(\D)$ satisfying
$$
\int_\D |f'(z)(1-|z|^2)^\alpha|^p d\lambda(z)<\infty.
$$
To cover the case of  $p\alpha<1$, we can describe $B^p_\alpha$ as the space of those functions $f$ in $H(\D)$ such that
\begin{equation}\label{230}
|||f|||_{B^p_\alpha}=\left(\int_\D |f''(z)(1-|z|^2)^{\alpha+1}|^p d\lambda(z)\right)^{1/p}<\infty.
\end{equation}
A norm of $f$ in $B^p_\alpha$ is
$$
\|f\|_{B^p_\alpha}=|f(0)|+|f'(0)|+|||f|||_{B^p_\alpha}.
$$
 For $q>0$ and $\beta>-1$, recall that the Bergman space $A^q_\beta$ consists of functions $f$ in $H(\D)$ with
$$
\|f\|_{A^q_\beta}^q=\int_\D |f(z)|^q (1-|z|^2)^\beta dA(z)<\infty.
$$
By (\ref{230}), for $p>1$ and $\alpha>0$, $f\in B^p_\alpha$ if and only if $f'' \in A^p_{p(\alpha+1)-2}$.

In this section, our proof of the main result involves the property of coefficient multipliers  for Besov spaces.
Given two  spaces $X$ and $Y$ of analytic functions in $\D$, a complex  sequence  $\{\lambda_n\}_{n=0}^\infty$  is said to be a coefficient multiplier from  $X$ to $Y$ if the function $\sum_{n=0}^\infty \lambda_n a_n z^n$ belongs to $Y$ whenever $\sum_{n=0}^\infty  a_n z^n \in X$. Denote by $(X, Y)$ the set of coefficient multipliers from $X$ to $Y$.  We refer to a recent book \cite{JVA} for the theory of coefficient multiplier between   spaces of analytic  functions in  $\D$.

Let $BV$ be the classical space of complex sequences of bounded variation; that is,
$$
BV=\left\{\{\lambda_n\}_{n=0}^\infty: \ \  |\lambda_0|+\sum_{n=0}^\infty |\lambda_{n+1}-\lambda_n|<\infty\right \}.
$$
 The following result is due to S. Buckley, P. Koskela, and D. Vukoti\'c \cite{BKV}.

\begin{otherth}\label{BV X}
Let $X$ be a Banach space of analytic functions in $\D$ such that for every $f\in X$ the  sequence $\{S_n f\}_{n=1}^\infty$ of Taylor polynomials    converges to $f$ in the norm of $X$. Then $BV \subseteq (X, X)$, and the inclusion is strict if the involution $Tf(z)=f(-z)$ is bounded on $X$.
\end{otherth}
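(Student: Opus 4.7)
The plan is to establish $BV \subseteq (X,X)$ via Abel summation (summation by parts) and then to exhibit an explicit multiplier outside $BV$ to obtain strictness. Fix $\{\lambda_n\}_{n=0}^\infty \in BV$ and $f(z)=\sum_{n=0}^\infty a_n z^n \in X$. Since $\{\lambda_n\}$ is bounded, the formal series $g(z)=\sum_{n=0}^\infty \lambda_n a_n z^n$ defines an analytic function on $\D$, and the goal is to show $g \in X$.

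First I would rewrite the partial sums of $g$ by summation by parts. Using $a_n z^n = S_n f - S_{n-1}f$ (with $S_{-1}f=0$), one obtains
$$
S_N g = \lambda_N\, S_N f + \sum_{n=0}^{N-1}(\lambda_n-\lambda_{n+1})\, S_n f.
$$
Because $\{\lambda_n\}\in BV$, the sequence converges (hence is bounded) and $\sum_n |\lambda_{n+1}-\lambda_n|<\infty$. The hypothesis that $S_nf\to f$ in $X$ provides $M_f := \sup_n \|S_n f\|_X <\infty$. For $N>M$ one then splits
$$
S_N g - S_M g = (\lambda_N-\lambda_M)S_N f + \lambda_M(S_N f - S_M f) + \sum_{n=M}^{N-1}(\lambda_n-\lambda_{n+1})S_n f,
$$
whose three terms are controlled in $X$-norm by $|\lambda_N-\lambda_M|\,M_f$, $|\lambda_M|\,\|S_Nf-S_Mf\|_X$, and $M_f \sum_{n\ge M}|\lambda_n-\lambda_{n+1}|$ respectively, each tending to $0$ as $M,N\to\infty$. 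Thus $\{S_N g\}$ is Cauchy in $X$, and completeness yields an $X$-limit $h$. Since $X$ consists of analytic functions on $\D$ with continuous point evaluations, $S_N g\to h$ pointwise on $\D$; but the power series trivially gives $S_N g\to g$ pointwise, so $h=g$ and $g\in X$. This proves $BV\subseteq (X,X)$.

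For strictness, suppose $T:f(z)\mapsto f(-z)$ is bounded on $X$. The sequence $\lambda_n=(-1)^n$ acts on $f(z)=\sum a_n z^n$ precisely as $Tf$, so $\{(-1)^n\}\in(X,X)$. On the other hand $|\lambda_{n+1}-\lambda_n|=2$ for every $n$, so $\{(-1)^n\}\notin BV$, and the inclusion is strict. The main obstacle is the identification of the $X$-limit with the formal power series $g$; this is straightforward when one knows point evaluations on $X$ are continuous, and indeed the standing hypothesis that Taylor polynomials converge in $X$-norm already forces continuity of coefficient extraction via Cauchy estimates on compact subdiscs. Everything else reduces to the algebraic identity from summation by parts and basic Banach-space completeness.
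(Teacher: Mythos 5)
This statement is Theorem \ref{BV X} of the paper only by citation: it is quoted from Buckley--Koskela--Vukoti\'c \cite{BKV} and the paper gives no proof of it, so there is no internal argument to compare yours against; I can only assess your proposal on its own terms, and it is essentially the standard proof of this result. The summation-by-parts identity $S_N g=\lambda_N S_N f+\sum_{n=0}^{N-1}(\lambda_n-\lambda_{n+1})S_n f$ is correct, the hypothesis $S_n f\to f$ in norm gives $M_f=\sup_n\|S_n f\|_X<\infty$, your three-term splitting of $S_Ng-S_Mg$ shows the partial sums are Cauchy in $X$, and the strictness argument is fine: $\lambda_n=(-1)^n$ acts on Taylor coefficients exactly as the involution $T$, so it lies in $(X,X)$ (boundedness of $T$ is even more than needed; $T(X)\subseteq X$ suffices), while $|\lambda_{n+1}-\lambda_n|=2$ shows it is not in $BV$.

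The one soft spot is the identification of the $X$-limit $h$ of $\{S_Ng\}$ with the formal series $g$. This genuinely requires that norm convergence in $X$ implies pointwise (equivalently, coefficientwise) convergence, i.e.\ that point evaluations, or the inclusion of $X$ into $H(\D)$ with the topology of local uniform convergence, are continuous. Your parenthetical justification --- that the Taylor-convergence hypothesis ``already forces continuity of coefficient extraction via Cauchy estimates on compact subdiscs'' --- is circular as stated: Cauchy estimates bound coefficients by sup-norms on subdiscs, and to compare those sup-norms with $\|\cdot\|_X$ you already need the very continuity you are trying to derive; the hypothesis that $S_nf\to f$ in norm does not by itself rule out a pathological complete norm on a space of analytic functions with discontinuous evaluations. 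In \cite{BKV}, and in every context where this theorem is applied (including the present paper, where $X=B^p_\alpha$), ``Banach space of analytic functions'' carries continuity of point evaluations as part of the definition, so the remedy is simply to state that standing assumption explicitly; with it, your argument is complete and is the same Abel-summation proof as the original.
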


We also need the following well-known  result (cf. Corollary 3.13 in  \cite{Zhu}).

\begin{otherth}\label{opera inter}
Suppose $a$, $b$, $\alpha$ are real parameters, $1\leq p<\infty$, and
$$
Tf(z)=(1-|z|^2)^a \ind \frac{(1-|w|^2)^b}{(1-z\overline{w})^{2+a+b}} f(w)dA(w).
$$
 Then the operator $T$ is bounded on $L^p(\D, dA_\alpha)$ if and only if $-pa<\alpha+1<p(b+1)$.
\end{otherth}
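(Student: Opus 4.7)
The plan is to prove sufficiency by Schur's test with a power-weight test function and necessity by testing $T$ on an explicit one-parameter family of power-weight functions. The central computational tool throughout will be the Forelli--Rudin asymptotic
$$
\ind \frac{(1-|w|^2)^c}{|1-z\overline w|^{c+2+s}} dA(w) \thickapprox (1-|z|^2)^{-s}, \qquad c>-1,\ s>0.
$$
It is natural to first absorb the ambient measure $dA_\alpha$ into the kernel, writing $Tf(z) = \ind H(z,w) f(w) dA_\alpha(w)$ with
$$
H(z,w) = (1-|z|^2)^a (1-|w|^2)^{b-\alpha}|1-z\overline w|^{-(2+a+b)},
$$
so that the boundedness question becomes the boundedness of an integral operator with kernel $H$ on $L^p(\D, dA_\alpha)$.

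For sufficiency I would apply Schur's test on $L^p(\D, dA_\alpha)$ with test function $h(z) = (1-|z|^2)^t$, where $t$ is to be chosen and $p'$ denotes the conjugate exponent of $p$. The two Schur integrals $\ind H(z,w) h(w)^{p'} dA_\alpha(w)$ and $\ind H(z,w) h(z)^{p} dA_\alpha(z)$ each reduce, by the Forelli--Rudin estimate, to powers of $(1-|z|^2)$ and $(1-|w|^2)$ respectively, after which the two Schur conditions become a pair of linear inequalities in $t$ together with the standing requirements $tp'+b-\alpha > -1$ and $tp+\alpha > -1$ needed to apply Forelli--Rudin. Solving this system should show that an admissible $t$ exists precisely when $-pa < \alpha+1 < p(b+1)$.

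For necessity, I would test $T$ on the family $f_\gamma(w) = (1-|w|^2)^\gamma$. Membership $f_\gamma \in L^p(\D, dA_\alpha)$ requires $p\gamma+\alpha > -1$, while Forelli--Rudin gives $Tf_\gamma(z) \thickapprox (1-|z|^2)^\gamma$ whenever $b+\gamma > -1$, so boundedness of $T$ forces the two norms to be comparable throughout the admissible strip. Letting $\gamma$ descend to the critical value $-(1+\alpha)/p$ forces $b+\gamma > -1$ at the threshold, i.e.\ $\alpha+1 < p(b+1)$. The matching inequality $-pa < \alpha+1$ is then obtained by duality: $T$ on $L^p(\D, dA_\alpha)$ is bounded if and only if its formal adjoint is bounded on $L^{p'}(\D, dA_\alpha)$, and the adjoint kernel has the same Forelli--Rudin shape with the roles of $a$ and $b$ interchanged, so the same test-family argument produces the symmetric endpoint.

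The main obstacle will be the exponent bookkeeping: both halves of the argument are algebraically tight, and a clumsy choice of $t$ or $\gamma$ easily yields only a weaker version of the stated condition. In particular, one must carry the critical exponent $\gamma \to -(1+\alpha)/p$ (and the analogous dual exponent) through the Forelli--Rudin asymptotics carefully, and in Schur's test the ``right'' choice of $t$ turns out to be dictated precisely by the requirement that both linear inequalities collapse simultaneously at the two endpoints $-pa$ and $p(b+1)$.
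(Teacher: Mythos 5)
The paper itself offers no proof of this statement: it is quoted directly from Zhu's book (Corollary 3.13 in \cite{Zhu}), where the argument is the Schur-test-plus-test-function scheme you outline. Your sufficiency half is on the standard track: for $1<p<\infty$ the test function $(1-|z|^2)^t$ works, and the two admissibility windows for $t$ coming from the two Schur integrals overlap exactly when $-pa<\alpha+1<p(b+1)$. Note, though, that for $p=1$ your formulation degenerates ($p'=\infty$), and that case needs the separate, easier criterion $\sup_w\int_\D |H(z,w)|\,dA_\alpha(z)<\infty$.

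The necessity half has a genuine error. In writing $H$ you replaced $(1-z\overline w)^{-(2+a+b)}$ by its modulus, i.e.\ you silently passed from $T$ to its positive majorant $S$; necessity for $S$ does not yield necessity for $T$, since $|Tf|\le S|f|$ only helps in the sufficiency direction. For the actual operator $T$ the claimed asymptotic $Tf_\gamma(z)\thickapprox(1-|z|^2)^\gamma$ is false: $f_\gamma$ is radial and the kernel is holomorphic in $z\overline w$, so rotation invariance kills every nonconstant term and one gets exactly $Tf_\gamma(z)=(b+\gamma+1)^{-1}(1-|z|^2)^a$ when $b+\gamma>-1$. Hence your limit $\gamma\downarrow-(1+\alpha)/p$ does not produce $\alpha+1<p(b+1)$; at best the requirement of absolute convergence gives the non-strict inequality, and strictness at the endpoint needs an extra quantitative step. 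The argument can be repaired along your own duality line: the exact identity $Tf_\gamma=c\,(1-|z|^2)^a$ with $c>0$ already forces $-pa<\alpha+1$, and applying the same radial test to the adjoint $T^*g(w)=(1-|w|^2)^{b-\alpha}\int_\D (1-|z|^2)^{a+\alpha}(1-w\overline z)^{-(2+a+b)}g(z)\,dA(z)$ on $L^{p'}(\D,dA_\alpha)$ forces $\alpha+1<p(b+1)$ when $p>1$; for $p=1$ the endpoint $\alpha=b$ must be excluded separately, e.g.\ by testing $T^*$ on the unimodular functions $g_w(z)=\overline{(1-w\overline z)^{2+a+b}}\,/\,|1-w\overline z|^{2+a+b}$, which produce a logarithmic blow-up.
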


For $g\in B_\alpha^p$ and  $f\in B_\alpha^q$, where $\frac{1}{p}+\frac{1}{q}=1$,  we give a new formula of $\langle g, f \rangle_\alpha$ as follows.

\begin{lem} \label{lem 41}
Suppose $p>1$, $\alpha>0$, and $q$ is the  real number with $\frac{1}{p}+\frac{1}{q}=1$. Let  $f(z)=\sum_{n=0}^\infty a_n z^n$ be in  $B_\alpha^q$ and let $g(z)=\sum_{n=0}^\infty b_n z^n$ be in  $B_\alpha^p$.
Then the   following limit
\begin{equation}\label{b280}
\lim_{r\rightarrow 1^-}\sum_{n=1}^\infty\frac{n!\,\Gamma(2\alpha)}{\Gamma(n+2\alpha-1)}\,nb_n\overline a_n r^{2n}
\end{equation}
exists and
\begin{equation}\label{b281}
\langle g, f \rangle_\alpha=b_1\overline a_1 +\ind m_{g, \alpha}''(z)\overline{f''(z)}(1-|z|^2)^{2\alpha}dA(z),
\end{equation}
where
$$
m_{g, \alpha}(z)=\frac{\Gamma(2\alpha)}{\Gamma(2\alpha+1)}\sum_{n=2}^\infty  \frac{n+2\alpha-1}{n-1}  b_n z^n.
$$
\end{lem}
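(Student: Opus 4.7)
The strategy is to recognize the right-hand side of $(\ref{b281})$ as arising from orthogonality of the monomials $\{z^n\}$ in the weighted Bergman space with weight $(1-|z|^2)^{2\alpha}\,dA$. Using $\int_\D|z|^{2k}(1-|z|^2)^{2\alpha}\,dA(z)=\frac{k!\,\Gamma(2\alpha+1)}{\Gamma(k+2\alpha+2)}$ and a routine rearrangement of the $\Gamma$-factors, the formal term-by-term integration of $m''_{g,\alpha}(z)\overline{f''(z)}$ produces exactly $\sum_{n=2}^\infty\frac{n\cdot n!\,\Gamma(2\alpha)}{\Gamma(n+2\alpha-1)}b_n\overline a_n$, which together with the $n=1$ contribution $b_1\overline a_1$ reproduces the coefficient sum defining $\langle g,f\rangle_\alpha$. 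The plan is therefore to pass from formal polynomial identities to general $g\in B^p_\alpha$ and $f\in B^q_\alpha$ by (i) showing the right-hand side of $(\ref{b281})$ is a bounded bilinear form on $B^p_\alpha\times B^q_\alpha$, (ii) writing the partial-sum expression in $(\ref{b280})$ as such an integral with $g,f$ replaced by the dilates $g_r,f_r$, and (iii) letting $r\to 1^-$.

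The key step is (i), and it rests on proving $m_{g,\alpha}\in B^p_\alpha$. I set $\lambda_0=\lambda_1=0$ and $\lambda_n=\frac{n+2\alpha-1}{2\alpha(n-1)}$ for $n\ge 2$, so that $m_{g,\alpha}(z)=\sum_{n=0}^\infty\lambda_n b_n z^n$. A direct computation gives $\lambda_{n+1}-\lambda_n=-\frac{1}{n(n-1)}$ for $n\ge 2$, hence $\{\lambda_n\}\in BV$. To invoke Theorem \ref{BV X} I need the Taylor polynomials to converge in the norm of $B^p_\alpha$; this follows from the characterization $|||f|||_{B^p_\alpha}^p\approx\int_\D|f''(z)|^p(1-|z|^2)^{p(\alpha+1)-2}\,dA(z)$ (with $p(\alpha+1)-2>-1$ because $p>1,\alpha>0$) combined with the standard fact that Taylor polynomials converge in norm in every Bergman space $A^p_\beta$ with $p>1$, $\beta>-1$. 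Theorem \ref{BV X} then yields $|||m_{g,\alpha}|||_{B^p_\alpha}\lesssim|||g|||_{B^p_\alpha}$, and H\"older's inequality applied to the $d\lambda$-pairing bounds the integral in $(\ref{b281})$ by $|||g|||_{B^p_\alpha}|||f|||_{B^q_\alpha}$.

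For (ii) and (iii) I note that $m_{g_r,\alpha}(z)=m_{g,\alpha}(rz)$, and for each $r\in(0,1)$ all the Taylor series involved converge uniformly on $\overline{\D}$, so the orthogonality computation becomes rigorous and yields
$$
\sum_{n=1}^\infty\frac{n!\,\Gamma(2\alpha)}{\Gamma(n+2\alpha-1)}\,nb_n\overline a_n r^{2n}=b_1\overline a_1 r^2+\ind m''_{g_r,\alpha}(z)\overline{f''_r(z)}(1-|z|^2)^{2\alpha}\,dA(z).
$$
The dilation continuity $g_r\to g$ in $B^p_\alpha$ and $f_r\to f$ in $B^q_\alpha$ (standard for Besov spaces of finite exponent), combined with the multiplier estimate from (i) applied to $g-g_r$, gives $m_{g_r,\alpha}\to m_{g,\alpha}$ in $B^p_\alpha$; the H\"older estimate then shows the right-hand side above converges to $b_1\overline a_1+\ind m''_{g,\alpha}(z)\overline{f''(z)}(1-|z|^2)^{2\alpha}\,dA(z)$ as $r\to 1^-$. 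This produces simultaneously the existence of the limit in $(\ref{b280})$ and the identity $(\ref{b281})$. The main obstacle is the multiplier step: once $m_{g,\alpha}\in B^p_\alpha$ is in hand, the remainder of the argument is a clean H\"older/density calculation.
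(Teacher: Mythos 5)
Your proposal is correct and follows essentially the same route as the paper: the $BV$ multiplier theorem (via norm convergence of Taylor polynomials in $B^p_\alpha$, which reduces to the Bergman space $A^p_{p(\alpha+1)-2}$ result of Zhu) gives $|||m_{g,\alpha}|||_{B^p_\alpha}\lesssim \|g\|_{B^p_\alpha}$, H\"older makes the integral a bounded bilinear form, and a dilation argument yields the limit and the identity. The only cosmetic difference is that you dilate both $f$ and $g$ and verify the finite-$r$ identity by orthogonality, whereas the paper dilates only $g$ and uses continuity of the functional $F_f$; both are sound.
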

\begin{proof}
 Note that $g\in B^p_\alpha$ if and only if $g'' \in A^p_{p(\alpha+1)-2}$. It follows from \cite[Corollary 4]{Zhu2} that the sequence of Taylor polynomials  of every function in $B^p_\alpha$  converges in norm. Set $\lambda_0=\lambda_1=0$ and $\lambda_n=\frac{n+2\alpha-1}{n-1}$ when  $n=2, \cdots$. It is easy to check that the sequence $\{\lambda_n\}_{n=0}^\infty$ belongs to $BV$.
By Theorem \ref{BV X}, $\{\lambda_n\}_{n=0}^\infty$  is a coefficient multiplier for $B^p_\alpha$.  The closed graph theorem yields $|||m_{g, \alpha}|||_{B^p_\alpha}\lesssim \|g\|_{B^p_\alpha}$ for all $g\in B^p_\alpha$.

For $f\in B^q_\alpha$, set $F_f(g)=\ind m_{g, \alpha}''(z)\overline{f''(z)}(1-|z|^2)^{2\alpha}dA(z)$, where $g\in B^p_\alpha$. Then the H\"older inequality gives that
\begin{align}\label{280}
|F_f(g)|\leq & \left(\ind |f''(z)|^q (1-|z|^2)^{q(\alpha+1)}d\lambda(z)\right)^{1/q} \nonumber \\
  & \times\left(\ind |m_{g, \alpha}''(z)|^p (1-|z|^2)^{p(\alpha+1)}d\lambda(z)\right)^{1/p} \nonumber \\
\leq &  |||f|||_{B^q_\alpha} |||m_{g, \alpha}|||_{B^p_\alpha}\lesssim |||f|||_{B^q_\alpha}  \|g\|_{B^p_\alpha},
\end{align}
which implies  that $F_f$ is a bounded linear functional on $B^p_\alpha$. Note that $\|g_r-g\|_{B^p_\alpha}\to 0$ as $r\to 1^-$. We get $\lim_{r\rightarrow 1^-} F_f(g_r)=F_f(g)$; that is,
\begin{align}\label{281}
&\lim_{r\rightarrow 1^-}\ind m_{g_r, \alpha}''(z)\overline{f''(z)}(1-|z|^2)^{2\alpha}dA(z) \nonumber \\
 =& \ind m_{g, \alpha}''(z)\overline{f''(z)}(1-|z|^2)^{2\alpha}dA(z).
\end{align}
We also see that
\begin{align}\label{282}
&\lim_{r\rightarrow 1^-}\ind m_{g_r, \alpha}''(z)\overline{f''(z)}(1-|z|^2)^{2\alpha}dA(z) \nonumber \\
  =&\lim_{r\rightarrow 1^-} \sum_{n=2}^\infty  \frac{\Gamma(2\alpha)(n+2\alpha-1)}{\Gamma(2\alpha+1)} n^2(n-1)r^n b_n \overline{a_n} \int_0^1 (1-t)^{2\alpha} t^{n-2}dt \nonumber \\
  =& \lim_{r\rightarrow 1^-}  \sum_{n=2}^\infty \frac{n!\,\Gamma(2\alpha)}{\Gamma(n+2\alpha-1)}\,nb_n\overline a_n r^{n} \nonumber \\
  =& \lim_{r\rightarrow 1^-}  \sum_{n=2}^\infty \frac{n!\,\Gamma(2\alpha)}{\Gamma(n+2\alpha-1)}\,nb_n\overline a_n r^{2n}.
\end{align}
Similarly,
\begin{align}\label{283}
 \ind m_{g, \alpha}''(z)\overline{f''(z)}(1-|z|^2)^{2\alpha}dA(z)=\sum_{n=2}^\infty \frac{n!\,\Gamma(2\alpha)}{\Gamma(n+2\alpha-1)}\,nb_n\overline a_n.
\end{align}
Hence,
\begin{align}\label{284}
\lim_{r\rightarrow 1^-}  \sum_{n=2}^\infty \frac{n!\,\Gamma(2\alpha)}{\Gamma(n+2\alpha-1)}\,nb_n\overline a_n r^{2n}=\sum_{n=2}^\infty \frac{n!\,\Gamma(2\alpha)}{\Gamma(n+2\alpha-1)}\,nb_n\overline a_n.
\end{align}
By (\ref{281}) and (\ref{282}),  the limit in (\ref{b280}) exists. By (\ref{283}) and (\ref{284}), we get (\ref{b281}). The proof is complete.
\end{proof}

\vspace{0.1truecm}
\noindent {\bf  Remark 4.}\ \
Let  $p>1$, $\alpha>0$, and let $q$ satisfy  $\frac{1}{p}+\frac{1}{q}=1$. Suppose $f\in B_\alpha^q$ and  $g\in B_\alpha^p$.  From (\ref{284}), the limit in the definition of  $\langle g, f \rangle_\alpha$ can be dropped via taking $r=1$ in the sum. This is different from Theorem \ref{3main}.    One reason of the  difference is that  there are functions in $\B_{\alpha, 0}$ whose Taylor polynomials divergent in norm, and  the sequence of Taylor polynomials  of any  function in $B^p_\alpha$  converges in norm.

From  Lemma \ref{lem 41} and its proof, we get the following conclusion, which is a new representation of   $\langle \cdot , \cdot \rangle_\alpha$.
\begin{prop}\label{ano reper inner}
Suppose $\alpha>0$,  both $f(z)=\sum_{n=0}^\infty a_nz^n$ and $g(z)=\sum_{n=0}^\infty b_nz^n$ belong to $H(\D)$, and the limit $\lim_{r\rightarrow 1^-}\sum_{n=1}^\infty\frac{n!\,\Gamma(2\alpha)}{\Gamma(n+2\alpha-1)}\,nb_n\overline a_n r^{2n}$ exists. Then
$$
 \langle g, f \rangle_\alpha=b_1\overline{a_1}+\lim_{r\rightarrow 1^-} \ind m_{g_r, \alpha}''(z)\overline{f''(z)}(1-|z|^2)^{2\alpha}dA(z).
$$
\end{prop}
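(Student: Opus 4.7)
The plan is to isolate from the proof of Lemma \ref{lem 41} the purely computational step that converts the integral into a power series in $r$, and then close with a one-line Abel-limit observation; neither step requires $f$ or $g$ to lie in any Besov space.

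Fix $r\in (0,1)$. Because $|a_n|^{1/n}$ and $|b_n|^{1/n}$ have $\limsup \leq 1$ for any $f,g\in H(\D)$, the Taylor coefficients of $m_{g_r,\alpha}''$ carry a factor $r^n$ and therefore decay geometrically; this is what will replace the Besov-space hypotheses used in Lemma \ref{lem 41}. Writing $m_{g_r,\alpha}''(z)=\sum_{n\geq 0}\alpha_n^{(r)} z^n$ and $f''(z)=\sum_{n\geq 0}\beta_n z^n$, I would evaluate in polar coordinates: the angular integral eliminates all off-diagonal terms, and Fubini on the iterated integral is justified because $|\alpha_n^{(r)}|=O(n^2 r^n)$, $|\beta_n|$ has sub-exponential growth, and $\int_0^1\rho^{2n+1}(1-\rho^2)^{2\alpha}\,d\rho=\tfrac12 B(n+1,2\alpha+1)$ has only polynomial decay in $n$. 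The algebraic reduction is identical to the chain (\ref{282}) in the proof of Lemma \ref{lem 41}, and gives
\[
\int_\D m_{g_r,\alpha}''(z)\overline{f''(z)}(1-|z|^2)^{2\alpha}\,dA(z)=\sum_{n\geq 2}\frac{n!\,\Gamma(2\alpha)}{\Gamma(n+2\alpha-1)}\,n\,b_n\overline{a_n}\,r^n.
\]

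Set $c_n:=\frac{n!\,\Gamma(2\alpha)}{\Gamma(n+2\alpha-1)}\,n\,b_n\overline{a_n}$ and $\varphi(s):=\sum_{n\geq 2}c_n s^n$, well-defined and analytic on $(-1,1)$ by the sub-exponential size of $c_n$. The hypothesis is that $\lim_{r\to 1^-}\sum_{n\geq 1}c_n r^{2n}$ exists, and since $c_1 = b_1\overline{a_1}$, this is equivalent to the existence of $\lim_{r\to 1^-}\varphi(r^2)$. Because $r\mapsto r^2$ is a bijection of $(0,1)$ preserving the endpoint $1$, this is in turn equivalent to $\lim_{s\to 1^-}\varphi(s)$, which coincides with $\lim_{r\to 1^-}\varphi(r)$. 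Combining with the previous display,
\[
\lim_{r\to 1^-}\int_\D m_{g_r,\alpha}''(z)\overline{f''(z)}(1-|z|^2)^{2\alpha}\,dA(z)=\lim_{r\to 1^-}\sum_{n\geq 2}c_n r^{2n}=\langle g,f\rangle_\alpha - b_1\overline{a_1},
\]
which is the claimed identity.

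The main obstacle is justifying the term-by-term integration without a Besov-space hypothesis on either argument, since for a generic $f\in H(\D)$ the integrand need not be absolutely integrable against area measure. The key point is that, for each fixed $r<1$, the geometric decay of the coefficients of $m_{g_r,\alpha}''$ combined with the sub-exponential growth of those of $f''$ makes the iterated polar expression absolutely convergent; once Fubini is secured at this level, the remaining manipulations are the same ones already carried out in the proof of Lemma \ref{lem 41}, and the second half of the argument reduces to the elementary observation that the two Abel limits $\lim_{r\to 1^-}\varphi(r)$ and $\lim_{r\to 1^-}\varphi(r^2)$ coincide.
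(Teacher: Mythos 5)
Your proposal is correct and follows essentially the same route as the paper, whose proof simply re-runs, for each fixed $r<1$, the term-by-term (polar-coordinate) integration from the chain (\ref{282}) in the proof of Lemma \ref{lem 41} to get $\sum_{n\ge 2}\frac{n!\,\Gamma(2\alpha)}{\Gamma(n+2\alpha-1)}nb_n\overline{a_n}r^{n}$, and then identifies the Abel limits in $r^{n}$ and $r^{2n}$. The only caveat, which your argument shares with the paper's own statement, is that for general $f\in H(\D)$ the right-hand integral need not converge absolutely with respect to area measure (so ``Fubini'' is not literally available) and must be read as the iterated polar, equivalently improper radial, integral; your observation that the angular integration kills the off-diagonal terms and leaves an absolutely convergent coefficient series is precisely what makes that reading work, so the substance of your proof matches the paper's.
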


For $p>1$, $\alpha>0$, denote by $\widetilde{B}_{\alpha}^{p}$ the set of those functions $h\in B_{\alpha}^{p}$ such that $h(0)=h'(0)=0$.  Now we state and prove the main result in this section.

\begin{thm}\label{dua Bp}
Suppose $p>1$, $\alpha>0$, and $q$ is the  real number with $\frac{1}{p}+\frac{1}{q}=1$.
Then for any  $f\in B_\alpha^q$, $ \langle \cdot,f\rangle_\alpha$ defines a bounded linear functional on $B_\alpha^p$. Conversely,
    if $F$ is a bounded linear functional on $B_\alpha^p$ with $F(1)=0$, then there exits a function $f\in B_\alpha^q$ such that
  $F(g)= \langle g,f\rangle_\alpha$ for all $g\in B_\alpha^p$. Moreover, there exist positive constants $C_1$ and $C_2$ independent of $f$ such that
  \begin{equation}\label{231}
  C_1 |||f|||_{B_\alpha^q}\leq \sup \{|F(g)|:  \ g\in\widetilde{ B}_\alpha^p, \ |||g|||_{B_\alpha^p}\leq 1\}\leq C_2 |||f|||_{B_\alpha^q}.
  \end{equation}
 \end{thm}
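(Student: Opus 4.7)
The argument splits naturally into an easier sufficiency direction and a harder necessity direction.

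For sufficiency, given $f\in B^q_\alpha$, I apply formula (\ref{b281}) of Lemma \ref{lem 41}: for $g(z)=\sum b_n z^n\in B^p_\alpha$,
$$\langle g,f\rangle_\alpha = b_1\overline{a_1} + \int_\D m''_{g,\alpha}(z)\overline{f''(z)}(1-|z|^2)^{2\alpha}\,dA(z).$$
The key weight-splitting $(1-|z|^2)^{2\alpha}=(1-|z|^2)^{(p(\alpha+1)-2)/p}(1-|z|^2)^{(q(\alpha+1)-2)/q}$ (valid because $1/p+1/q=1$) lets H\"older bound the integral by $|||m_{g,\alpha}|||_{B^p_\alpha}\,|||f|||_{B^q_\alpha}$, and the multiplier bound $|||m_{g,\alpha}|||_{B^p_\alpha}\lesssim\|g\|_{B^p_\alpha}$ from inside Lemma \ref{lem 41} shows that $\langle\cdot,f\rangle_\alpha$ is bounded on $B^p_\alpha$. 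Specializing to $g\in\widetilde{B}^p_\alpha$ (so $b_0=b_1=0$ and $\|g\|_{B^p_\alpha}=|||g|||_{B^p_\alpha}$) yields the right inequality of (\ref{231}).

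For necessity, a bounded linear $F$ on $B^p_\alpha$ with $F(1)=0$ produces $f\in H(\D)$ satisfying $F(g)=\langle g,f\rangle_\alpha$ for all $g\in B^p_\alpha$ by the coefficient construction of Theorem \ref{dualgeneral} together with density of polynomials in $B^p_\alpha$ (\cite[Corollary 4]{Zhu2}). Write $\beta=p(\alpha+1)-2$ and $M=\sup\{|F(g)|:g\in\widetilde{B}^p_\alpha,\,|||g|||_{B^p_\alpha}\leq 1\}$. The map $g\mapsto g''$ is an isometric isomorphism $\widetilde{B}^p_\alpha\to A^p_\beta$, and the multiplier sequence $\{(n+2\alpha-1)/[2\alpha(n-1)]\}_{n\geq 2}$, together with its reciprocal, is of bounded variation, so by Theorem \ref{BV X} the map $g\mapsto m''_{g,\alpha}$ is a bounded bijection $\widetilde{B}^p_\alpha\to A^p_\beta$ with comparable norms. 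Formula (\ref{b281}) with $b_1=0$ then translates $|F(g)|\leq M\,|||g|||_{B^p_\alpha}$ into the assertion that
$$L(h)\,:=\,\int_\D h(z)\overline{f''(z)}(1-|z|^2)^{2\alpha}\,dA(z)$$
is a bounded functional on $A^p_\beta$ with $\|L\|\lesssim M$. The Bergman duality $(A^p_\beta)^*\cong A^q_\beta$ under $\int h\,\overline{w}\,dA_\beta$, itself a consequence of Theorem \ref{opera inter} applied to the Bergman projection $P_\beta$ on $L^q(dA_\beta)$, then yields $w\in A^q_\beta$ with $\|w\|_{A^q_\beta}\lesssim M$ and $L(h)=\int h\,\overline{w}\,dA_\beta$ for all $h\in A^p_\beta$. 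Computing $L(z^n)$ two different ways and matching gives a closed Gamma-ratio relating the Taylor coefficients of $w$ and $f''$; inverting it produces the explicit representation
$$f''(z) = (2\alpha+1)(\beta+1)\int_\D \frac{w(\zeta)(1-|\zeta|^2)^\beta}{(1-z\overline\zeta)^{2\alpha+2}}\,dA(\zeta).$$

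The main obstacle, and the step at which Theorem \ref{opera inter} does the real work, is using this representation to bound $|||f|||_{B^q_\alpha}$. I multiply both sides by $(1-|z|^2)^{2\alpha-\beta}$ and view the right-hand side as the operator of Theorem \ref{opera inter} with $a=2\alpha-\beta$ and $b=\beta$ acting on $L^q(\D,dA_{\alpha_0})$ with $\alpha_0=\beta$. A short exponent calculation, again using $1/p+1/q=1$, shows $\|(1-|z|^2)^{2\alpha-\beta}f''\|^q_{L^q(dA_\beta)}$ is a constant multiple of $|||f|||^q_{B^q_\alpha}$. The admissibility condition $-qa<\alpha_0+1<q(b+1)$ reduces to $q>1$ (automatic) and, after using the identity $p-q(p-2)=q$ (equivalent to $pq=p+q$), to $q(\alpha+1)>1$, which is trivial. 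Theorem \ref{opera inter} then delivers $\|(1-|z|^2)^{2\alpha-\beta}f''\|_{L^q(dA_\beta)}\lesssim\|w\|_{A^q_\beta}\lesssim M$, hence $|||f|||_{B^q_\alpha}\lesssim M$, giving the left inequality of (\ref{231}). The genuinely delicate point throughout is consistent bookkeeping among the three weight exponents $2\alpha$, $\beta=p(\alpha+1)-2$, and $q(\alpha+1)-2$; the rest is routine computation.
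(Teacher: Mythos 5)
Your sufficiency direction and your final Schur-test step are correct and match the paper in substance: the bound via (\ref{b281}), H\"older, and the multiplier estimate for $m_{g,\alpha}$ give the right inequality of (\ref{231}), and your exponent bookkeeping at the end is sound (indeed $q(2\alpha-\beta)+\beta=q(\alpha+1)-2$ and the admissibility condition of Theorem \ref{opera inter} reduces, via $pq=p+q$, to $q(\alpha+1)>1$; the paper performs the same computation in different coordinates, applying Theorem \ref{opera inter} with $a=\alpha+1$, $b=\alpha-1$ on $L^q(\D,d\lambda)$). Your route through the Bergman duality $(A^p_\beta)^*\cong A^q_\beta$ plus coefficient matching, instead of the paper's Hahn--Banach extension of $F\circ G^{-1}$ from the subspace $G(\widetilde{B}^p_\alpha)\subset L^p(\D,d\lambda)$ followed by the direct construction of a function $f_2$ from the representing $\varphi\in L^q(\D,d\lambda)$, is a legitimate variant; and your explicit observation that the reciprocal multiplier sequence is also of bounded variation, so that $g\mapsto m''_{g,\alpha}$ is a bounded bijection of $\widetilde{B}^p_\alpha$ onto $A^p_\beta$ with bounded inverse, makes precise a point the paper uses only implicitly when it writes $\|F\circ G^{-1}\|\lesssim\|G^{-1}\|\,\|F\|$.

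There is, however, a genuine gap at the step where you introduce $L$. You define $L(h)=\int_\D h(z)\overline{f''(z)}(1-|z|^2)^{2\alpha}\,dA(z)$ and claim that formula (\ref{b281}) ``translates'' $|F(g)|\le M\,|||g|||_{B^p_\alpha}$ into boundedness of $L$ on $A^p_\beta$ with $\|L\|\lesssim M$. But Lemma \ref{lem 41} proves (\ref{b281}) only under the hypothesis $f\in B^q_\alpha$, which is exactly what you are trying to establish; at this stage $f$ is merely the analytic function produced by the coefficient construction of Theorem \ref{dualgeneral}, and the coefficient bound obtained there gives only polynomial growth of the $a_n$, so the integral defining $L(h)$ need not converge absolutely for general $h\in A^p_\beta$ (nor even for monomials, for small $\alpha$) in the Lebesgue sense. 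So the key input $\|L\|\lesssim M$ is asserted circularly. The repair is straightforward and is essentially what the paper does: define $L:=F\circ T^{-1}$, where $T:g\mapsto m''_{g,\alpha}$ is your bounded bijection $\widetilde{B}^p_\alpha\to A^p_\beta$; then $\|L\|\le\|T^{-1}\|\,M$ with no appeal to (\ref{b281}), Bergman duality produces $w\in A^q_\beta$, and your coefficient matching (now computing $L$ on monomials through $F(z^n)$, i.e.\ through the $a_n$, and through the representation by $w$) yields the same kernel formula for $f''$. Only after Theorem \ref{opera inter} has given $f\in B^q_\alpha$ with $|||f|||_{B^q_\alpha}\lesssim M$ may you legitimately invoke Lemma \ref{lem 41} to confirm $F(g)=\langle g,f\rangle_\alpha$ on all of $B^p_\alpha$. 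With that reordering your argument is complete.
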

\begin{proof}
Let $f\in B_\alpha^q$ and $g\in B_\alpha^p$. By Lemma \ref{lem 41}, the limit in the definition of $ \langle g,f\rangle_\alpha$ exists. Clearly, $ \langle \cdot,f\rangle_\alpha$ defines a  linear functional on $B_\alpha^p$.
From (\ref{b281}) and (\ref{280}),  we get
\begin{align}\label{2101}
\left|\langle g, f \rangle_\alpha\right|\lesssim |f'(0)||g'(0)|+|||f|||_{B^q_\alpha}  \|g\|_{B^p_\alpha}.
\end{align}
Thus the  linear functional $ \langle \cdot,f\rangle_\alpha$ is also bounded on $B_\alpha^p$.

On the other hand, suppose   $F$ is a bounded linear functional on $B_\alpha^p$ with $F(1)=0$.  Let
$$
a_n=\overline{F(z^n)} \frac{\Gamma(n+2\alpha-1)}{n! \,\Gamma(2\alpha) n}, \ \ n=1, 2, 3, \cdots ,
$$
 and let $f(z)=\sum_{n=1}^{+\infty} a_n z^n$ for $z\in \D$.
It is known  that polynomials are dense in $B_\alpha^p$. From  the proof of Proposition \ref{dualgeneral},  $f\in H(\D)$ and  $F(g)= \langle g,f\rangle_\alpha$ for all $g\in B_\alpha^p$.
Next we show that $f$ belongs to $B_\alpha^q$.

For $g\in \widetilde{B}_{\alpha}^{p}$, set
$$
G(g)(z)=(1-|z|^{2})^{\alpha+1}m_{g, \alpha}''(z).
$$
From the proof of Lemma \ref{lem 41}, $|||m_{g, \alpha}|||_{B^p_\alpha}\lesssim |||g|||_{B^p_\alpha}$ for all $g\in B^p_\alpha$. Thus $G$ is a bounded mapping from $\widetilde{B}_{\alpha}^{p}$ to $L^{p}(\D,d\lambda)$.
Write $G(\widetilde{B}_{\alpha}^{p})$ the image of $G$. Then $G: \widetilde{B}_{\alpha}^{p}\rightarrow G(\widetilde{B}_{\alpha}^{p})$ is  bijective. As usual, denote by  $G^{-1}$  the inverse mapping of $G$.
Then  $F \circ G^{-1}$is a bounded linear functional on $G(\widetilde{B}_{\alpha}^{p})$. By the Hahn-Banach extension theorem,  there is a function $\varphi$ in $L^{q}(\D,d\lambda)$  such that
\begin{eqnarray*}
F \circ G^{-1}(k)=\ind k(z)\overline{\varphi(z)}d\lambda(z)
\end{eqnarray*}
for all $k$ in $G(\widetilde{B}_{\alpha}^{p})$, and
\begin{equation}\label{bub2101}
\|F \circ G^{-1}\|=\|\varphi\|_{L^{q}(\D,d\lambda)}.
\end{equation}
 Thus, for $g\in \widetilde{B}_{\alpha}^{p}$, we get
\begin{eqnarray}\label{2102}
F(g)&=&F\circ G^{-1}(G(g))\nonumber \\
&=&\ind G(g)(z)\overline{\varphi(z)}d\lambda(z) \nonumber \\
&=& \ind (1-|z|^{2})^{\alpha+1}m_{g, \alpha}''(z) \overline{\varphi(z)}d\lambda(z).
\end{eqnarray}
Note that
\begin{eqnarray*}
&&\ind |\varphi(z)|(1-|z|^{2})^{\alpha-1}dA(z)\\
&\leq &\ind |\varphi(z)|(1-|z|^{2})^{-1}dA(z)\\
&\leq&\left[\ind(1-|z|^{2})^{p}d\lambda(z)\right]^{\frac{1}{p}}
\left[\ind  |\varphi(z)|^{q}d\lambda(z)\right]^{\frac{1}{q}}<\infty.
\end{eqnarray*}
Then we can define an analytic function $f_1$ in $\D$ by
$$
f_1(z)=(2\alpha+1)\ind \frac{\varphi(w)(1-|w|^{2})^{\alpha-1}}{(1-z\overline{w})^{2+2\alpha}} dA(w).
$$
Let $f_2$ be the function with $f_2''=f_1$. Then
\begin{eqnarray} \label{2103}
&~&(1-|z|^{2})^{\alpha+1}f_2''(z) \nonumber \\
&= &(1-|z|^{2})^{\alpha+1} (2\alpha+1) \ind \frac{(1-|w|^{2})^{\alpha-1}\varphi(w)}{(1-z\overline{w})^{2+2\alpha}}dA(w).
\end{eqnarray}
Since $-q(\alpha+1)<-1<q\alpha$, it follows from  Theorem \ref{opera inter} that
\begin{eqnarray*}
\ind \left|(1-|z|^{2})^{\alpha+1} f_2''(z)\right|^q d\lambda(z) \lesssim \ind  |\varphi(z)|^{q}d\lambda(z).
\end{eqnarray*}
Joining this with (\ref{bub2101}), we get
\begin{equation}\label{2104}
|||f_2|||_{B^q_\alpha} \lesssim \|F \circ G^{-1}\|   \lesssim \|G^{-1}\| \ \|F\|.
\end{equation}
By (\ref{2102}) and  (\ref{2103}), for $g\in \widetilde{B}_{\alpha}^{p}$, we see
\begin{eqnarray*}
&~&\ind m_{g, \alpha}''(z)\overline{f_2''(z)}(1-|z|^2)^{2\alpha}dA(z)\\
&=& \ind (1-|w|^{2})^{\alpha-1}\overline{\varphi(w)} dA(w) (2\alpha+1) \ind \frac{m_{g, \alpha}''(z)(1-|z|^2)^{2\alpha}}{(1-\overline{z}w)^{2+2\alpha}} dA(z)\\
&=& \ind m_{g, \alpha}''(w) (1-|w|^{2})^{\alpha-1}\overline{\varphi(w)} dA(w)=F(g).
\end{eqnarray*}
Combining this with Lemma \ref{lem 41}, we get  $F(g)=\langle g, f_2 \rangle_\alpha$ for all $g\in \widetilde{B}_{\alpha}^{p}$.
We also  have proven that  $F(g)= \langle g,f\rangle_\alpha$ for all $g\in B_\alpha^p$. Bear in mind (\ref{b281}). Then  $f(z)-f(0)-f'(0)z=f_2(z)-f_2(0)-f_2'(0)z$.   Due to
(\ref{2104}),   $f\in B^q_\alpha$ and $|||f|||_{B^q_\alpha}   \lesssim   \|F\|$.  By  this and (\ref{2101}),  we obtain   (\ref{231}). The proof is complete.
\end{proof}

\vspace{0.1truecm}
\noindent {\bf Acknowledgements. }

The authors want to thank Professor Kehe Zhu  for interesting discussions on the subject.

\vspace{0.1truecm}
\noindent {\bf Data Availability. }

All data generated or analyzed during this study are included in this article and in its bibliography.

\vspace{0.1truecm}
\noindent {\bf Conflict of Interest. }

The authors declared that they have no conflict of interest.

\end{document}